\newcommand{\IR}{\mathbf{R}}
\newcommand{\IN}{\mathbf{N}}
\newcommand{\IH}{\mathbf{H}}
\newcommand{\cH}{\mathcal{H}}
\newcommand{\cF}{\mathcal{F}}
\newtheorem{thm}{Theorem}[section]
\newtheorem{lm}[thm]{Lemma}
\newtheorem{ex}[thm]{Example}
\newtheorem{prop}[thm]{Proposition}
\newtheorem{cor}[thm]{Corollary}
\newtheorem{df}[thm]{Definition}
\newtheorem{theorem}[thm]{Theorem}
\newcommand{\e}{e_{\xi,y}}
\renewcommand{\subset}{\ensuremath{\subseteq}}
\newcommand{\norm}[1]{\left\lVert#1\right\rVert}
\newcommand{\abs}[1]{\lvert#1\rvert}
\DeclarePairedDelimiterX{\inner}[2]{\langle}{\rangle}{#1, #2}
\DeclareMathOperator{\supp}{supp}
\DeclareMathOperator{\dist}{dist}
\DeclareMathOperator{\tr}{tr}
\renewcommand{\Re}{\operatorname{Re}}
\renewcommand{\epsilon}{\ensuremath{\varepsilon}}
\begin{document}
\title{The Coherent State Transform and an Application to the Hyperbolic Laplacian}
\author{Timon Ruben Weinmann \orcidlink{0000-0003-1139-0860}}
\address{Imperial College London}
\email{t.weinmann21@imperial.ac.uk}
\date{\today}

\begin{abstract}
\noindent We review some basic ideas from the theory of overfilling families and use the special case of coherent states to prove Weyl asymptotics for the euclidean Laplacian and the hyperbolic Laplacian on a domain with Dirichlet boundary conditions.

\smallskip

\noindent \textbf{Keywords.} hyperbolic laplacian, coherent states, Weyl law

\smallskip

\noindent \textbf{AMS Classification.} 58C40, 35P05
\end{abstract}
\maketitle
\tableofcontents
\section{Introduction}
Suppose that $\Omega\subset\IR^d$ be an open domain of finite measure. Denote by $\lambda_k$, $k\in\IN$ the eigenvalues of the Laplacian on $\Omega$ with Dirichlet boundary conditions.
One version of the Weyl law for the Dirichlet Laplacian reads
$$\lim_{\lambda\to\infty}\lambda^{-1-d/2}\sum_k(\lambda-\lambda_k)_+=\abs{\Omega}\int_{\IR^d}\left(1-\abs{\xi}^2\right)_+\frac{d\xi}{(2\pi)^d}.$$ 
There are countless proofs of this fact, see \cite{book} for instance.
In this paper, we seek to establish a similar asymptotic formula for the hyperbolic Laplacian.
If we consider the operator 
$$H=-\partial_{x_1}^2-e^{2x_1}{\Delta_{\tilde{x}}}$$
where $\Delta_{\tilde{x}}=\sum_{j=2}^d \partial_{x_j}^2$, on a bounded domain in $\Omega\subset\IR^d$ and denote by $\lambda_k$ its Dirichlet eigenvalues, we find
$$\lim_{\lambda\to\infty}\lambda^{-1-d/2}\sum_k (\lambda-{\lambda}_k)_+= \int_{\IR^d\times \Omega}\left(1 - \xi_1^2-e^{2y_1}\abs{\tilde{\xi}}^2\right)_+\frac{d\xi dy}{(2\pi)^d}.$$
The operator $H+\frac{(d-1)^2}{4}$ on $L^2(\IR^d)$ is unitarily equivalent to the hyperbolic Laplacian on $L^2(\IH^d)$. This asymptotic formula thus easily translates to an assertion about the eigenvalues of the hyperbolic Laplacian. In the following we even call $H$ the hyperbolic Laplacian for simplicity.
We prove this asymptotic formula using the theory of coherent states. In section 2 we recall some definitions and theorems from the theory of overfilling families of functions as presented in \cite{Berezin}. This discussion culminates in a proof of the Weyl asymptotic formula for the euclidean Laplacian. This is in fact a special case of the treatment of Weyl asymptotics for fractional Laplacians using coherent states considered in \cite{geisinger}. Section 3 constitutes the principal part of this paper and is dedicated to proving the Weyl asymptotic formula for the hyperbolic Laplacian $H$ using coherent states. Finally, in section 4, we discuss the asymptotic behavior of the error term in the Weyl law for the hyperbolic Laplacian and obtain the more precise formula
$$\sum_k (\lambda-{\lambda}_k)_+=\lambda^{1+d/2} \int_{\IR^{d}\times \Omega}(1- {\xi_1}^2-e^{2y_1}\abs{\tilde{\xi}}^2)_+ \frac{d\xi dy}{(2\pi)^d}+O(\lambda^{d/2+2/3}).$$
Note that this formula also follows from a more general discussion (\cite{bruening}) of the asymptotic behavior of the spectral functions of elliptic differential operators with smooth coefficients. Our approach, however, exemplifies the utility of the coherent state approach, a theory that has classically been applied to powers of euclidean Laplacians and Schr\"odinger operators.
\section{Coherent States}
\subsection{Overfilling Families}
We introduce overfilling families and prove how they can be used to compute traces of operators following \cite{Berezin}.
\begin{df}
Let $\cH$ be a separable Hilbert space and let $(A,\mu)$ be a measure space. A family $e_\alpha\in \cH$, $\alpha\in A$ of elements of $\cH$ indexed by $A$ is said to be overfilling if the map $A\ni \alpha\mapsto e_\alpha\in\cH$ is measurable and if for all $f\in\cH$ one has
$$\norm{f}^2=\int_A \abs{\inner{e_\alpha}{f}}^2 d\mu(\alpha).$$
\end{df}
Given an overfilling family, we obtain an isometric embedding $\cH\to L^2(A)$ by setting
$$f(\alpha)=\inner{e_\alpha}{f}$$
for $f\in\cH$. Note in particular that $e_\alpha(\beta)=\inner{e_\beta}{e_\alpha}=\overline{e_\beta(\alpha)}$. We can therefore use the same notation for the inner products in $\cH$ and in $L^2(A)$. Note that 
$$\inner{g}{f}=\int_A \inner{g}{e_\alpha}\inner{e_\alpha}{f}d\mu(\alpha)$$
for any $g,f\in\cH$.

%
\begin{theorem}\label{trace}
Let $T$ be a non-negative self-adjoint trace-class operator on $\cH$. Then
$$\tr T = \int_A \inner{e_\alpha}{Te_\alpha}d\mu(\alpha)$$
\end{theorem}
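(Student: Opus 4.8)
The plan is to use the spectral decomposition of $T$ together with the defining property of the overfilling family, exploiting non-negativity to justify interchanging a sum and an integral via the monotone convergence theorem. Since $T$ is non-negative, self-adjoint and trace-class, it is in particular compact, so by the spectral theorem there is an orthonormal system $(\f_n)$ in $\cH$ and non-negative eigenvalues $(t_n)$ with $T = \sum_n t_n \inner{\f_n}{\cdot}\f_n$, and $\tr T = \sum_n t_n$. First I would write
$$\inner{e_\alpha}{Te_\alpha} = \sum_n t_n \abs{\inner{\f_n}{e_\alpha}}^2,$$
which holds pointwise in $\alpha$ since the series converges (it is dominated by $\norm{T}\norm{e_\alpha}^2$, or one can argue directly from the spectral decomposition). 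Each summand $\alpha \mapsto t_n\abs{\inner{\f_n}{e_\alpha}}^2 = t_n\abs{\inner{e_\alpha}{\f_n}}^2$ is non-negative and measurable, the latter because $\alpha\mapsto e_\alpha$ is measurable by the definition of an overfilling family.

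Next I would integrate over $A$ and apply Tonelli's theorem (monotone convergence for series of non-negative measurable functions) to swap the sum and the integral:
$$\int_A \inner{e_\alpha}{Te_\alpha}\,d\mu(\alpha) = \sum_n t_n \int_A \abs{\inner{e_\alpha}{\f_n}}^2\,d\mu(\alpha).$$
Now the overfilling property applied to each $f = \f_n$ gives $\int_A \abs{\inner{e_\alpha}{\f_n}}^2\,d\mu(\alpha) = \norm{\f_n}^2 = 1$, so the right-hand side collapses to $\sum_n t_n = \tr T$, which completes the proof. One should also note in passing that this computation simultaneously shows the integrand is integrable (the integral equals the finite number $\tr T$), so the left-hand side is well-defined.

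The only real subtlety — the step I would be most careful about — is the justification of the interchange of summation and integration; this is exactly where non-negativity of $T$ is essential, as it lets us invoke Tonelli rather than needing a separate dominated-convergence argument, and it is the reason the hypothesis is stated with $T \geq 0$ rather than for general trace-class $T$. A secondary point worth a sentence is the pointwise identity $\inner{e_\alpha}{Te_\alpha} = \sum_n t_n\abs{\inner{\f_n}{e_\alpha}}^2$: this follows from continuity of the inner product and convergence of $\sum_n t_n\inner{\f_n}{e_\alpha}\f_n$ to $Te_\alpha$ in $\cH$, which is immediate from the spectral representation of $T$. Everything else is a routine unwinding of definitions.
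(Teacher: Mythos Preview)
Your proof is correct and follows essentially the same route as the paper: expand $\inner{e_\alpha}{Te_\alpha}$ via an orthonormal eigenbasis of $T$, interchange sum and integral, and apply the overfilling identity termwise. The only minor difference is that you justify the interchange by Tonelli using $T\ge 0$, whereas the paper bounds the absolute summands via Cauchy--Schwarz and the trace-class property to invoke Fubini.
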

\begin{proof}
Indeed let $\psi_k$ be an orthonormal basis of $\cH$ consisting of eigenfunctions of $T$,
\begin{align*}
\int_A \inner{e_\alpha}{Te_\alpha}d\mu(\alpha)&=\int_A\sum_k  \inner{e_\alpha}{\psi_k}\inner{\psi_k}{Te_\alpha}d\mu(\alpha)\\
&=\sum_k {\int_A \inner{e_\alpha}{\psi_k}\inner{\psi_k}{Te_\alpha }d\mu(\alpha)}\\
&=\sum_k {\int_A \inner{T^*\psi_k}{e_\alpha }\inner{e_\alpha}{\psi_k}d\mu(\alpha)}\\
&=\sum_k \inner{\psi_k}{T\psi_k}.
\end{align*}
Note that interchanging the sum and the integral is justified since 
\begin{align*}
\sum_k {\int_A \abs{\inner{e_\alpha}{\psi_k}\inner{\psi_k}{Te_\alpha }}d\mu(\alpha)}&\le \sum_k \left(\int_A \abs{\inner{e_\alpha}{\psi_k}}^2d\mu(\alpha)\right)^{1/2}\left(\int_A\abs{\inner{T\psi_k}{e_\alpha }}^2d\mu(\alpha)\right)^{1/2}\\
&=\sum_k \norm{\psi_k}\norm{T\psi_k}=\sum_k \norm{T\psi_k}<\infty
\end{align*}
by virtue of $T$ being trace-class and the $\psi_k$ being eigenfunctions.
\end{proof}
\subsection{Examples}
Let us discuss two simple examples of overfilling families. 
\begin{ex} Consider the case when $A=\IN$ and $(e_n)_{n\in \IN}$ is an orthonormal basis of $\cH$. In this case the above procedure yields the well-known isometric isomorphism $\cH\to \ell^2(\IN)$ and $f(n)$ is simply the $n$-th coefficient $c_n$ in the Fourier expansions $f=\sum_n c_n e_n$. Given a trace-class operator $T$ on $\cH$ we obtain $\tr T = \sum_k T_{k,k}$, where $T_{k,l}=\inner{e_k}{Te_l}$, as expected.
\end{ex}
\begin{ex}Let $\Omega\subset\IR^d$ be open and bounded. Let $e_\xi(x)=e^{i\xi x}$. By extending $f\in L^2(\Omega)$ by zero outside $\Omega$, we may compute its Fourier transform. Plancherel's theorem yields
$$\int \abs{f(x)}^2dx=\int \abs{\hat{f}(\xi)}^2d\xi=\int \abs{\inner{e_\xi}{f}}^2\frac{d\xi}{(2\pi)^d}.$$
One thus obtains an overfilling family with $A=\IR^d$ and $d\mu(\xi)=(2\pi)^{-d}d\xi$.
\end{ex}
\subsection{Definition of Coherent States}
The rest of this paper will deal with a class of overfilling families in $L^2$-spaces constructed as follows.
Let $g\in C^\infty_c(\IR^d)$ be real-valued and such that $g(-x)=g(x)$ and $\int g(x)^2dx=1$.
For $\xi, y\in \IR^d$ define $\e(x)=e^{i\xi x}g(x-y)$. Clearly $\norm{\e}_{L^2(\IR^d)}=1$. The space $\IR^{2d}$ indexing the family $(\e)$ shall be equipped with the measure $(2\pi)^{-d}d\xi dy$. The functions $\e\in L^2(\IR^d)$ are called coherent states.
\begin{prop}The family $\e$, $(\xi,y)\in\IR^{2d}$ is overfilling in $L^2(\IR^d)$.
\end{prop}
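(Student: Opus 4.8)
The plan is to verify the defining identity $\norm{f}_{L^2(\IR^d)}^2 = \int_{\IR^{2d}} \abs{\inner{\e}{f}}^2 \frac{d\xi\, dy}{(2\pi)^d}$ for every $f \in L^2(\IR^d)$, together with the measurability of $(\xi,y)\mapsto \e$. Measurability is immediate since $(\xi,y,x)\mapsto e^{i\xi x}g(x-y)$ is continuous and $g$ is compactly supported, so the map into $L^2(\IR^d)$ is in fact continuous. For the norm identity, the key observation is that for fixed $y$, the inner product $\inner{\e}{f} = \int_{\IR^d} e^{-i\xi x}\,\overline{g(x-y)}\,f(x)\,dx$ is, up to the $(2\pi)$ normalization, the Fourier transform (in the $\xi$ variable) of the function $x\mapsto g(x-y)f(x)$, which lies in $L^2(\IR^d)$ because $g$ is bounded.

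So the main step is: first fix $y$ and apply Plancherel's theorem in the $\xi$ variable to get
\begin{equation*}
\int_{\IR^d}\abs{\inner{\e}{f}}^2\,\frac{d\xi}{(2\pi)^d} = \int_{\IR^d}\abs{g(x-y)}^2\abs{f(x)}^2\,dx = \int_{\IR^d}g(x-y)^2\abs{f(x)}^2\,dx,
\end{equation*}
using that $g$ is real-valued. Then integrate this over $y\in\IR^d$ and apply Tonelli's theorem (the integrand is non-negative, so the interchange is unconditionally justified) to obtain
\begin{equation*}
\int_{\IR^{2d}}\abs{\inner{\e}{f}}^2\,\frac{d\xi\, dy}{(2\pi)^d} = \int_{\IR^d}\abs{f(x)}^2\left(\int_{\IR^d}g(x-y)^2\,dy\right)dx = \int_{\IR^d}\abs{f(x)}^2\,dx,
\end{equation*}
where the inner integral equals $\int g(z)^2\,dz = 1$ by the normalization of $g$ and translation invariance of Lebesgue measure. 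This gives exactly $\norm{f}_{L^2}^2$.

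I do not expect any genuine obstacle here; the proof is essentially a combination of Plancherel in one block of variables and Tonelli in the other. The only point requiring a modicum of care is the bookkeeping of the $(2\pi)^{-d}$ factor, which must be attached to the $d\xi$ integration (matching the Fourier convention used in the earlier Fourier example in the excerpt) rather than to the $dy$ integration. It is worth remarking that the compact support and smoothness of $g$ are not actually needed for this proposition — boundedness and the $L^2$-normalization suffice — but those stronger hypotheses will be used later when analysing the action of differential operators on the coherent states.
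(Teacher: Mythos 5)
Your proof is correct and follows the same route as the paper: apply Plancherel's theorem in the $\xi$ variable for fixed $y$, then integrate over $y$ using the normalization of $g$. The extra remarks on measurability and Tonelli are a harmless elaboration of what the paper leaves implicit.
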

\begin{proof}
We need to show that 
$$\int_{\IR^{2d}}\abs{\inner{\e}{f}}^2\frac{d\xi dy}{(2\pi)^d}=\int_{\IR^d}\abs{f(x)}^2 dx.$$
Note that $\inner{\e}{f}=\int e^{-i\xi x}g(x-y)f(x)dx$ is the Fourier transform of $x\mapsto (2\pi)^{d/2}g(x-y)f(x)$. Therefore, Plancherel's theorem yields
$$\int_{\IR^d}\abs{\inner{\e}{f}}^2 d\xi = (2\pi)^{d}\int_{\IR^d}g(x-y)^2\abs{f(x)}^2dx$$
Integration with respect to $y$ concludes the proof.
\end{proof}
In the following section we will need to make use of the following technical
\begin{lm}\label{symbol1}
$$\inner{\e}{-\partial_{x_j}^2\e}=\xi_j^2+\int_{\IR^d}\abs{\partial_{z_j}g(z)}^2dz$$
\end{lm}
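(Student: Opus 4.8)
The plan is to turn the second-order quantity into a first-order one by integration by parts, and then simply expand the square. Since $g\in C^\infty_c(\IR^d)$, the coherent state $\e(x)=e^{i\xi x}g(x-y)$ is smooth and compactly supported, so integrating by parts in the $x_j$ variable produces no boundary terms and one has
$$\inner{\e}{-\partial_{x_j}^2\e}=\inner{\partial_{x_j}\e}{\partial_{x_j}\e}=\norm{\partial_{x_j}\e}_{L^2(\IR^d)}^2.$$
So the whole computation reduces to evaluating this $L^2$ norm.

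Next I would apply the product rule to get $\partial_{x_j}\e(x)=e^{i\xi x}\bigl(i\xi_j\,g(x-y)+(\partial_j g)(x-y)\bigr)$, so that, since $\abs{e^{i\xi x}}=1$,
$$\norm{\partial_{x_j}\e}^2=\int_{\IR^d}\bigabs{i\xi_j\,g(x-y)+(\partial_j g)(x-y)}^2\,dx.$$
Expanding the modulus squared splits this into three pieces: $\xi_j^2\int g(x-y)^2\,dx$, the term $\int \abs{(\partial_j g)(x-y)}^2\,dx$, and a cross term. By the translation invariance of Lebesgue measure and the normalization $\int g(z)^2\,dz=1$, the first piece equals $\xi_j^2$ and the second equals $\int_{\IR^d}\abs{\partial_{z_j}g(z)}^2\,dz$, which are exactly the two terms in the claimed formula.

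It then remains to check that the cross term vanishes, which is the only place calling for a little care. Because $g$ (hence $\partial_j g$) is real-valued, the cross term is $2\xi_j\,\Re\!\left(i\int g(x-y)(\partial_j g)(x-y)\,dx\right)$, and the integral inside is real, so the real part of $i$ times it is zero; equivalently, $\int g\,\partial_j g\,dx=\tfrac12\int\partial_j(g^2)\,dx=0$ by compact support. (The hypothesis $g(-x)=g(x)$ is not needed here; it will matter elsewhere.) Combining the three evaluations gives $\inner{\e}{-\partial_{x_j}^2\e}=\xi_j^2+\int_{\IR^d}\abs{\partial_{z_j}g(z)}^2\,dz$, as desired. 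The main ``obstacle'' is thus merely bookkeeping: justifying the integration by parts via the compact support of $g$ and not dropping or mis-signing the cross term.
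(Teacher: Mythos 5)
Your proof is correct and amounts to a mild reordering of the paper's computation: the paper expands $\partial_{x_j}^2\e$ explicitly, pairs with $\e$, and only at the end integrates by parts to convert $-\int g\,\partial_{z_j}^2 g\,dz$ into $\int\abs{\partial_{z_j}g}^2\,dz$, whereas you integrate by parts once at the outset to reduce everything to $\norm{\partial_{x_j}\e}^2$ and then expand the modulus squared. Both routes perform exactly one integration by parts and arrive at the same three integrals, so the difference is mostly cosmetic; still, yours is slightly more transparent, since in the norm-squared form the cross term involves $\Re\bigl(-i\xi_j\,g\,\partial_{x_j}g\bigr)$, which vanishes pointwise because $g\,\partial_{x_j}g$ is real, with no need for an odd-integrand or boundary argument. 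You are also right that the hypothesis $g(-x)=g(x)$ is not actually needed for this lemma: the paper invokes the symmetry of $g$ to kill the cross term, but compact support (via $\int\partial_{z_j}(g^2)\,dz=0$) or your pointwise cancellation already suffices. The evenness of $g$ earns its keep elsewhere, e.g.\ in Lemma~\ref{symbol} and in the $\Phi^*\xi_1^2\Phi$ and $\Phi^*e^{2y_1}\abs{\tilde{\xi}}^2\Phi$ computations, where the analogous cross terms really do require it.
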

\begin{proof}
Direct computation yields
$$\partial_{x_j}\e(x)=i\xi_j e^{i\xi x}g(x-y)+e^{i\xi x}\partial_{x_j}g(x-y)$$
and 
\begin{equation}\label{tec}
\partial_{x_j}^2\e(x)=-\xi_j^2 e^{i\xi x}g(x-y)+2i\xi_j e^{i\xi x}\partial_{x_j}g(x-y)+e^{i\xi x}\partial_{x_j}^2g(x-y).
\end{equation}
Therefore 
$$\inner{\e}{-\partial_{x_j}^2\e}=\int_{\IR^d}\xi_j^2 (g(x-y))^2dx+2i\xi_j \int_{\IR^d}\partial_{x_j}g(x-y)g(x-y)dx-\int_{\IR^d}\partial_{x_j}^2g(x-y)g(x-y)dx.$$
By symmetry of $g$, the second integral on the right vanishes and by integration by parts, the third integral is equal to
$\int_{\IR^d}(\partial_{z_j}g(z))^2 dz$.
\end{proof}
\begin{lm}The Laplacian acts on the functions $\e$ in the following way:
$$\inner{\e}{-\Delta\e}_{L^2(\IR^d)}=\abs{\xi}^2+\int_\IR \abs{\nabla g(z)}^2dz.$$
\end{lm}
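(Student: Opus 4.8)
The plan is to obtain this as an immediate consequence of Lemma~\ref{symbol1}. Write the Laplacian as the sum $\Delta = \sum_{j=1}^d \partial_{x_j}^2$, so that $-\Delta\e = \sum_{j=1}^d (-\partial_{x_j}^2\e)$. Since the inner product $\inner{\e}{\cdot}$ is linear in its second argument and the sum is finite, we may interchange the sum with the inner product to get
$$\inner{\e}{-\Delta\e}_{L^2(\IR^d)} = \sum_{j=1}^d \inner{\e}{-\partial_{x_j}^2\e}_{L^2(\IR^d)}.$$

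Next I would substitute the formula from Lemma~\ref{symbol1} for each term, yielding
$$\inner{\e}{-\Delta\e}_{L^2(\IR^d)} = \sum_{j=1}^d\left(\xi_j^2 + \int_{\IR^d}\abs{\partial_{z_j}g(z)}^2\,dz\right) = \sum_{j=1}^d \xi_j^2 + \int_{\IR^d}\sum_{j=1}^d\abs{\partial_{z_j}g(z)}^2\,dz.$$
Finally, recognizing $\sum_{j=1}^d \xi_j^2 = \xi^2$ and $\sum_{j=1}^d \abs{\partial_{z_j}g(z)}^2 = \abs{\nabla g(z)}^2$ (here the integral is over $\IR^d$, matching Lemma~\ref{symbol1}) completes the argument.

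There is no real obstacle here: the lemma is a direct corollary of Lemma~\ref{symbol1}, and the only thing to be careful about is that the interchange of the finite sum with the integral and inner product requires no justification beyond linearity. One could equally well redo the computation from scratch using \eqref{tec} summed over $j$, but reusing Lemma~\ref{symbol1} is cleaner.
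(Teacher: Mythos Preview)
Your proposal is correct and matches the paper's own proof, which simply states that the result follows immediately from Lemma~\ref{symbol1}. You have spelled out exactly that: sum the identity of Lemma~\ref{symbol1} over $j=1,\dots,d$ and recognize $\sum_j\xi_j^2=\xi^2$ and $\sum_j\abs{\partial_{z_j}g}^2=\abs{\nabla g}^2$.
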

\begin{proof}
Follows immediately from Lemma \ref{symbol1}.
\end{proof}
Instead of the entire space $\IR^d$, consider now an open set $\Omega\subset\IR^d$ of finite measure.
For $\epsilon>0$ let $\Omega_\epsilon=\{x\in \Omega\colon \dist(x,\partial\Omega)>\epsilon\}$. We impose on $g$ the additional condition that $\supp g\subset \{z\in \IR^d\colon \abs{z}<1\}$. Note that $g^\epsilon(z)=\epsilon^{-d/2}g(z/\epsilon)$ still satisfies the symmetry and normalization conditions demanded of $g$. We may therefore define
$\e^\epsilon(x)=e^{i\xi x}g^\epsilon(x-y)$. 

Any function $f\in L^2(\Omega_\epsilon)$ can be extended by zero to a function in $L^2(\IR^d)$. Thus
$$\inner{\e^\epsilon}{f}_{L^2(\IR^d)}=\inner{\e^\epsilon}{f}_{L^2(\Omega_\epsilon)}.$$
The map $y\mapsto \inner{\e^\epsilon}{u}$ vanished outside $\Omega_\epsilon+\{z\in \IR^d\colon \abs{z}<\epsilon\}\subset\Omega$, since $\supp g^\epsilon\subset \{z\in \IR^d\colon \abs{z}<\epsilon\}$. Therefore
$$\int_{\IR^{d}\times \IR^d}\abs{\inner{\e^\epsilon}{f}_{L^2(\IR^d)}}^2\frac{d\xi dy}{(2\pi)^d}=\int_{\IR^{d}\times \Omega}\abs{\inner{\e^\epsilon}{f}_{L^2(\Omega_\epsilon)}}^2\frac{d\xi dy}{(2\pi)^d}$$
Finally, 
$$\int_{\IR^{d}\times \Omega}\abs{\inner{\e^\epsilon}{f}_{L^2(\Omega_\epsilon)}}^2\frac{d\xi dy}{(2\pi)^d}=\norm{f}^2_{L^2(\IR^d)}=\norm{f}^2_{L^2(\Omega_\epsilon)}.$$
This proves the following
\begin{prop}\label{domain}
Let $\Omega\subset\IR^d$ be an open set of finite measure. Assume that $g$ is supported in the open ball $\{z\in\IR^d\colon \abs{z}<1\}$. Then
the family $(\e^\epsilon)$, $y\in\Omega$, $\xi\in\IR^d$ of coherent states constructed using $g^\epsilon$ is overfilling in $L^2(\Omega_\epsilon)$.
\end{prop}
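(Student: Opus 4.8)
The plan is to deduce this directly from the proposition already proved above: the family $\e^\epsilon$ indexed by $(\xi,y)\in\IR^{2d}$, equipped with the measure $(2\pi)^{-d}d\xi\,dy$, is overfilling in $L^2(\IR^d)$. This applies because $g^\epsilon(z)=\epsilon^{-d/2}g(z/\epsilon)$ inherits the symmetry and the $L^2$-normalization demanded of $g$. All that then remains is a localization argument showing that nothing is lost when one passes from $L^2(\IR^d)$ to $L^2(\Omega_\epsilon)$ and from the index set $\IR^d\times\IR^d$ to $\IR^d\times\Omega$.

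First I would fix $f\in L^2(\Omega_\epsilon)$ and extend it by zero to an element of $L^2(\IR^d)$, still denoted $f$; this extension is isometric, and since $\supp f\subset\Omega_\epsilon$ the inner products $\inner{\e^\epsilon}{f}_{L^2(\IR^d)}$ and $\inner{\e^\epsilon}{f}_{L^2(\Omega_\epsilon)}$ coincide. Next I would note that $\inner{\e^\epsilon}{f}=\int_{\IR^d} e^{-i\xi x}g^\epsilon(x-y)f(x)\,dx$ can be nonzero only if there is an $x$ with $f(x)\neq 0$ and $g^\epsilon(x-y)\neq 0$, that is, with $x\in\Omega_\epsilon$ and $\abs{x-y}<\epsilon$, using $\supp g^\epsilon\subset\{\abs z<\epsilon\}$. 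Hence the function $(\xi,y)\mapsto\inner{\e^\epsilon}{f}$ vanishes outside $\IR^d\times\bigl(\Omega_\epsilon+\{\abs z<\epsilon\}\bigr)$.

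The one geometric step — and the only place the hypothesis on $g$ and the definition of $\Omega_\epsilon$ are really used — is the inclusion $\Omega_\epsilon+\{\abs z<\epsilon\}\subset\Omega$. I would prove it by taking $x\in\Omega_\epsilon$: since $\dist(x,\partial\Omega)>\epsilon$ the open ball $B(x,\epsilon)$ is disjoint from $\partial\Omega$, hence is the disjoint union of the two relatively open sets $B(x,\epsilon)\cap\Omega$ and $B(x,\epsilon)\setminus\overline{\Omega}$; as $B(x,\epsilon)$ is connected and the first set contains $x$, the second is empty, so $B(x,\epsilon)\subset\Omega$ and in particular $x+z\in\Omega$ whenever $\abs z<\epsilon$.

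Combining these observations, $\int_{\IR^d\times\IR^d}\abs{\inner{\e^\epsilon}{f}_{L^2(\IR^d)}}^2(2\pi)^{-d}d\xi\,dy$ equals the same integral taken over $\IR^d\times\Omega$, which in turn equals that integral with $L^2(\IR^d)$ replaced by $L^2(\Omega_\epsilon)$ in the inner product; by the overfilling property on $\IR^d$ this common value is $\norm{f}_{L^2(\IR^d)}^2=\norm{f}_{L^2(\Omega_\epsilon)}^2$, which is exactly the identity required of an overfilling family. Measurability of $(\xi,y)\mapsto\e^\epsilon\in L^2(\Omega_\epsilon)$ follows from the corresponding statement in $L^2(\IR^d)$. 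I do not anticipate any real obstacle: the argument is bookkeeping with supports, and the short connectedness argument for $B(x,\epsilon)\subset\Omega$ is the only point that demands actual proof rather than inspection.
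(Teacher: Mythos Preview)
Your argument is correct and follows essentially the same route as the paper: extend $f$ by zero, use that the inner products in $L^2(\IR^d)$ and $L^2(\Omega_\epsilon)$ agree, observe that $y\mapsto\inner{\e^\epsilon}{f}$ is supported in $\Omega_\epsilon+\{\abs z<\epsilon\}\subset\Omega$, and then invoke the overfilling property on $\IR^d$. The only difference is that you supply a short connectedness proof of the inclusion $\Omega_\epsilon+\{\abs z<\epsilon\}\subset\Omega$, which the paper simply asserts.
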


\subsection{Weyl's Law for the Dirichlet Laplacian}
To illustrate the utility of coherent states, we will prove the celebrated Weyl Law for the Dirichlet Laplacian on a domain in euclidean space.

\begin{prop}\label{lapl}
Let $\lambda_k$, $k\in\IN$ be the eigenvalues of the Dirichlet Laplacian in $\Omega$ counted with multiplicity. Then
$$\liminf_{\lambda\to\infty}\lambda^{-1-d/2}\sum_k (\lambda-{\lambda}_k)_+\ge \abs{\Omega} \int_{\IR^d}\left(1 - \abs{\xi}^2\right)_+\frac{d\xi }{(2\pi)^d}.$$
\end{prop}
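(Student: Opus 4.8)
The plan is to use the coherent states $\e^\epsilon$ from Proposition \ref{domain} as trial functions in the variational (max-min) characterisation of the sum $\sum_k(\lambda-\lambda_k)_+$. Recall that for a non-negative self-adjoint operator $-\Delta$ with eigenvalues $\lambda_k$ one has the variational principle
$$\sum_k(\lambda-\lambda_k)_+ = \max_{0\le\gamma\le 1}\tr\big(\gamma(\lambda+\Delta)\big),$$
the maximum being taken over all trace-class operators $\gamma$ on $L^2(\Omega)$ with $0\le\gamma\le 1$; it is attained at the spectral projector $\gamma=\mathbf 1_{(-\infty,\lambda]}(-\Delta)$. Hence for \emph{any} admissible $\gamma$ we get a lower bound $\sum_k(\lambda-\lambda_k)_+\ge\tr(\gamma(\lambda+\Delta))$. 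Given an $L^\infty$ function $0\le\rho\le 1$ on $\IR^d\times\Omega$ (a "symbol"), I would build the operator
$$\gamma = \int_{\IR^d\times\Omega}\rho(\xi,y)\,\langle\e^\epsilon,\,\cdot\,\rangle\,\e^\epsilon\,\frac{d\xi dy}{(2\pi)^d},$$
i.e.\ the Toeplitz-type operator with symbol $\rho$. Since the family $(\e^\epsilon)$ is overfilling in $L^2(\Omega_\epsilon)$, the case $\rho\equiv 1$ gives the identity, so in general $0\le\gamma\le 1$ holds automatically; it is trace-class provided $\rho$ is integrable (which it is, $\Omega$ having finite measure and $\rho$ being supported where $\rho\le 1$ — one should cut $\rho$ off to a bounded range of $\xi$, or simply take $\rho=\mathbf 1_{\{\xi^2<\lambda\}}$ which is already compactly supported in $\xi$).

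Next I would compute $\tr(\gamma(\lambda+\Delta))$ using Theorem \ref{trace} and the symbol calculus from Lemma \ref{symbol1}. Unwinding the definitions,
$$\tr\big(\gamma(\lambda+\Delta)\big) = \int_{\IR^d\times\Omega}\rho(\xi,y)\,\big\langle\e^\epsilon,(\lambda+\Delta)\e^\epsilon\big\rangle\,\frac{d\xi dy}{(2\pi)^d},$$
and by the scaled version of Lemma \ref{symbol1} (replacing $g$ by $g^\epsilon$, which multiplies the gradient term by $\epsilon^{-2}$) one has
$$\big\langle\e^\epsilon,(\lambda+\Delta)\e^\epsilon\big\rangle = \lambda - \xi^2 - \epsilon^{-2}\!\int_{\IR^d}\abs{\nabla g(z)}^2\,dz =: \lambda - \xi^2 - C_g\epsilon^{-2}.$$
Note this is independent of $y$. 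The natural choice is $\rho(\xi,y)=\mathbf 1\{\xi^2 \le \lambda - C_g\epsilon^{-2}\}$ (independent of $y$), for which the integrand $\rho\cdot(\lambda-\xi^2-C_g\epsilon^{-2})=(\lambda-\xi^2-C_g\epsilon^{-2})_+$ is non-negative, so that $\gamma\ge 0$ is manifest and the trace is exactly computable: carrying out the $y$-integral over $\Omega$ contributes the factor $\abs{\Omega}$ (here one should be slightly careful: the overfilling property holds on $L^2(\Omega_\epsilon)$, so one actually obtains $\abs{\Omega_\epsilon}$, and the Dirichlet eigenvalues of $\Omega_\epsilon$ majorise those of $\Omega$ by domain monotonicity — alternatively one restricts $y$ to $\Omega_\epsilon$ from the start). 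This yields
$$\sum_k(\lambda-\lambda_k)_+ \;\ge\; \abs{\Omega_\epsilon}\int_{\IR^d}\big(\lambda-\xi^2-C_g\epsilon^{-2}\big)_+\,\frac{d\xi}{(2\pi)^d}.$$

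Finally I would take the asymptotic limit. Substituting $\xi=\sqrt{\lambda}\,\eta$ gives a factor $\lambda^{d/2}$ from the measure and $\lambda$ from the bracket, so
$$\lambda^{-1-d/2}\sum_k(\lambda-\lambda_k)_+ \;\ge\; \abs{\Omega_\epsilon}\int_{\IR^d}\big(1-\eta^2-C_g\epsilon^{-2}\lambda^{-1}\big)_+\,\frac{d\eta}{(2\pi)^d}.$$
Letting $\lambda\to\infty$ first, the term $C_g\epsilon^{-2}\lambda^{-1}\to 0$ and dominated convergence gives $\liminf \ge \abs{\Omega_\epsilon}\int_{\IR^d}(1-\eta^2)_+\,d\eta/(2\pi)^d$; then letting $\epsilon\to 0$ and using $\abs{\Omega_\epsilon}\to\abs{\Omega}$ (by monotone convergence, since $\Omega_\epsilon\uparrow\Omega$) yields the claim. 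The main subtlety — really the only one — is the bookkeeping around $\Omega_\epsilon$ versus $\Omega$ and the order of limits: one must send $\lambda\to\infty$ for fixed $\epsilon$ before sending $\epsilon\to 0$, so that the $\epsilon^{-2}$ in the gradient correction does no harm, and invoke domain monotonicity of Dirichlet eigenvalues (or argue directly that a trial density supported over $\Omega_\epsilon$ is admissible for the problem on $\Omega$) to transfer the bound back to $\Omega$. Establishing $0\le\gamma\le 1$ rigorously from the overfilling identity is the one place where a short argument is needed, but it is immediate: for $\phi\in L^2$, $\langle\phi,\gamma\phi\rangle=\int\rho(\xi,y)\abs{\langle\e^\epsilon,\phi\rangle}^2\,d\xi dy/(2\pi)^d$ lies between $0$ and $\int\abs{\langle\e^\epsilon,\phi\rangle}^2\,d\xi dy/(2\pi)^d=\norm{\phi}^2$.
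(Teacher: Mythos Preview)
Your argument is correct and arrives at essentially the same inequality, but the mechanism is different from the paper's. The paper applies Theorem~\ref{trace} directly to the trace-class operator $T=(\lambda+\Delta)_+$ on $L^2(\Omega_\epsilon)$ (after passing to $\Omega_\epsilon$ via domain monotonicity), which gives
\[
\sum_k(\lambda-\tilde\lambda_k)_+=\int_{\IR^d\times\Omega}\langle\e^\epsilon,(\lambda+\Delta)_+\e^\epsilon\rangle_{L^2(\Omega_\epsilon)}\,\frac{d\xi\,dy}{(2\pi)^d},
\]
then restricts $y$ to $\Omega_{2\epsilon}$ and uses Jensen's inequality for the spectral measure of $-\Delta$ to replace $\langle\e^\epsilon,(\lambda+\Delta)_+\e^\epsilon\rangle$ by $(\langle\e^\epsilon,(\lambda+\Delta)\e^\epsilon\rangle)_+$. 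You instead invoke the variational principle $\sum_k(\lambda-\lambda_k)_+\ge\tr\gamma(\lambda+\Delta)$ for any $0\le\gamma\le1$, build a Toeplitz-type trial $\gamma$ with symbol $\rho=\mathbf 1_{\{\xi^2<\lambda-C_g\epsilon^{-2}\}}$, and compute the trace directly from Lemma~\ref{symbol1}. Both routes land on the same bound (yours with $\abs{\Omega_\epsilon}$, the paper's with $\abs{\Omega_{2\epsilon}}$) and finish identically. Your approach bypasses Theorem~\ref{trace} and the spectral Jensen step, and in the version where you restrict $y\in\Omega_\epsilon$ from the outset it does not even need domain monotonicity, since $\e^\epsilon\in C^\infty_c(\Omega)$ already; the paper's route, on the other hand, is what gets reused verbatim for the hyperbolic operator in Section~3. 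One small wrinkle: your claim that ``$\rho\equiv1$ gives the identity'' is only literally true on $L^2(\Omega_\epsilon)$, not on $L^2(\Omega)$, but your final check $\langle\phi,\gamma\phi\rangle\le\int_{\IR^{2d}}\abs{\langle\e^\epsilon,\phi\rangle}^2\,d\xi\,dy/(2\pi)^d=\norm{\phi}^2$ (using the overfilling property on all of $\IR^d$) is the correct argument for $\gamma\le1$.
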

\begin{proof}
Denote by $\lambda_k$ and $\tilde{\lambda}_k$ the $k$-th Dirichlet eigenvalue of the Laplacian with Dirichlet conditions in $\Omega$, respectively $\Omega_\epsilon$. By domain monotonicity we have $\lambda_k\le \tilde{\lambda}_k$ and consequently $(\lambda-{\lambda}_k)_+\ge (\lambda-\tilde{\lambda}_k)_+$. By Theorem \ref{trace} and Proposition \ref{domain} therefore
$$\sum_k (\lambda-{\lambda}_k)_+\ge \sum_k(\lambda-\tilde{\lambda}_k)_+= \int_{\IR^d\times \Omega}\inner{\e^\epsilon}{(\lambda+\Delta)_+\e^\epsilon}_{L^2(\Omega_\epsilon)}\frac{d\xi dy}{(2\pi)^d}.$$
Clearly
$$\int_{\IR^d\times \Omega}\inner{\e^\epsilon}{(\lambda+\Delta)_+\e^\epsilon}_{L^2(\Omega_\epsilon)}\frac{d\xi dy}{(2\pi)^d}\ge \int_{\IR^d\times \Omega_{2\epsilon}}\inner{\e^\epsilon}{(\lambda+\Delta)_+\e^\epsilon}_{L^2(\Omega_\epsilon)}\frac{d\xi dy}{(2\pi)^d}.$$
For $y\in\Omega_{2\epsilon}$ the function $\e^\epsilon$ is supported in $\Omega_\epsilon$ and consequently
$$\inner{\e^\epsilon}{(\lambda+\Delta)_+\e^\epsilon}_{L^2(\Omega_\epsilon)}=\inner{\e^\epsilon}{(\lambda+\Delta)_+\e^\epsilon}_{L^2(\IR^d)}.$$
Since $\mu\mapsto\mu_+$ is convex, applying Jensen's inequality to the spectral measure of $-\Delta$, we obtain
$$\inner{\e^\epsilon}{(\lambda+\Delta)_+\e^\epsilon}_{L^2(\IR^d)}\ge \left(\inner{\e^\epsilon}{(\lambda+\Delta)\e^\epsilon}_{L^2(\IR^d)}\right)_+= \left(\lambda -\abs{\xi}^2-\int_{\IR^d} \abs{\nabla g^\epsilon(z)}^2dz\right)_+.$$
Consequently,
\begin{align}\label{lowerE}
\sum_k(\lambda-\lambda_k)_+&\ge \int_{\IR^d\times\Omega_{2\epsilon}}\left(\lambda -\abs{\xi}^2-\int_{\IR^d} \abs{\nabla g^\epsilon(z)}^2dz\right)_+\frac{d\xi dy}{(2\pi)^d}\nonumber\\\nonumber
&=\lambda^{1+d/2}\int_{\IR^d\times\Omega_{2\epsilon}}\left(1 -\abs{\xi}^2-\frac{1}{\lambda}\int_{\IR^d} \abs{\nabla g^\epsilon(z)}^2dz\right)_+\frac{d\xi dy}{(2\pi)^d}\\
&\ge\lambda^{1+d/2}\abs{\Omega_{2\epsilon}}\int_{\IR^d}\left(1 -\abs{\xi}^2-\frac{1}{\lambda}\int_{\IR^d} \abs{\nabla g^\epsilon(z)}^2dz\right)_+\frac{d\xi}{(2\pi)^d}.
\end{align}
\color{black}
Finally we obtain
$$\liminf_{\lambda\to\infty}\lambda^{-1-d/2}\sum_k(\lambda-\lambda_k)_+\ge \abs{\Omega_{2\epsilon}}\int_{\IR^d}\left(1 -\abs{\xi}^2\right)_+\frac{d\xi}{(2\pi)^d}$$
by dominated convergence. Letting $\epsilon\to 0$ concludes the proof, since $1_{\Omega_{2\epsilon}}(x)\to1_{\Omega}(x)$ for all $x\in\IR^d$ and thus $\abs{\Omega_{2\epsilon}}\to \abs{\Omega}$ by dominated convergence.
\end{proof}
We now seek to prove an upper bound on $\sum_k(\lambda-\lambda_k)_+$. This result is due to Li and Yau \cite{LYa} and our proof follows \cite{book}.
\begin{prop}\label{upperE}
$$\sum_k(\lambda-\lambda_k)_+\le \abs{\Omega}\int_{\IR^d}\left(\lambda-\abs{\xi}^2\right)_+\frac{d\xi}{(2\pi)^d}$$
\end{prop}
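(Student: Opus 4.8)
The plan is to follow the classical argument of Li and Yau, which, in contrast to the lower bound, needs only the plain Fourier transform rather than the full strength of coherent states. Let $u_k$ be an orthonormal basis of $L^2(\Omega)$ consisting of Dirichlet eigenfunctions, $-\Delta u_k=\lambda_k u_k$, extended by zero to all of $\IR^d$. Since $\abs{\Omega}<\infty$ the spectrum is discrete, so only finitely many $\lambda_k$ lie below $\lambda$ and
$$\sum_k(\lambda-\lambda_k)_+=\lambda N(\lambda)-\sum_{\lambda_k<\lambda}\lambda_k,\qquad N(\lambda)=\#\{k:\lambda_k<\lambda\}.$$
Writing $e_\xi(x)=e^{i\xi x}$ and $\widehat{u_k}(\xi)=\inner{e_\xi}{u_k}$, I would introduce the \emph{phase-space density}
$$\rho(\xi)=\sum_{\lambda_k<\lambda}\bigabs{\widehat{u_k}(\xi)}^2 .$$

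The heart of the matter is the pair of bounds $0\le\rho(\xi)\le\abs{\Omega}$. Non-negativity is obvious. For the upper bound, $e_\xi$ restricted to $\Omega$ lies in $L^2(\Omega)$ with $\norm{e_\xi}_{L^2(\Omega)}^2=\abs{\Omega}$, so Bessel's inequality for the orthonormal system $(u_k)$ gives $\sum_k\abs{\inner{u_k}{e_\xi}}^2\le\abs{\Omega}$; since $\abs{\inner{u_k}{e_\xi}}=\bigabs{\widehat{u_k}(\xi)}$, truncating the sum yields $\rho(\xi)\le\abs{\Omega}$. Next, Plancherel's theorem gives $\int_{\IR^d}\bigabs{\widehat{u_k}(\xi)}^2\frac{d\xi}{(2\pi)^d}=\norm{u_k}_{L^2}^2=1$, and the Rayleigh identity together with Plancherel applied to $\nabla u_k$ gives $\lambda_k=\int_{\Omega}\abs{\nabla u_k}^2=\int_{\IR^d}\xi^2\bigabs{\widehat{u_k}(\xi)}^2\frac{d\xi}{(2\pi)^d}$. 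Summing these two identities over the finitely many $k$ with $\lambda_k<\lambda$ (the interchange of finite sum and integral being harmless) gives
$$N(\lambda)=\int_{\IR^d}\rho(\xi)\,\frac{d\xi}{(2\pi)^d},\qquad \sum_{\lambda_k<\lambda}\lambda_k=\int_{\IR^d}\xi^2\rho(\xi)\,\frac{d\xi}{(2\pi)^d},$$
and therefore $\sum_k(\lambda-\lambda_k)_+=\int_{\IR^d}(\lambda-\xi^2)\rho(\xi)\,\frac{d\xi}{(2\pi)^d}$.

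To conclude I would estimate the integrand pointwise. If $\xi^2\le\lambda$ then $(\lambda-\xi^2)\rho(\xi)\le(\lambda-\xi^2)\abs{\Omega}$ because $\rho(\xi)\le\abs{\Omega}$; if $\xi^2>\lambda$ then $(\lambda-\xi^2)\rho(\xi)\le 0$ because $\rho(\xi)\ge 0$. In both cases $(\lambda-\xi^2)\rho(\xi)\le\abs{\Omega}(\lambda-\xi^2)_+$, and integrating over $\IR^d$ against $(2\pi)^{-d}\,d\xi$ gives the claimed bound.

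The argument is essentially bookkeeping once one has the bound $\rho(\xi)\le\abs{\Omega}$, which is the one conceptual input: it says that the ``number of eigenfunctions per unit of momentum'' is controlled by the spatial volume, it is where the finiteness of $\abs{\Omega}$ enters, and it is the exact dual of the role played by $\int\abs{\nabla g^\epsilon}^2$ in the lower bound of Proposition \ref{lapl}. The only other point to keep straight is that the Dirichlet eigenfunctions genuinely lie in $H^1_0(\Omega)$, so that the Rayleigh identity $\lambda_k=\int_\Omega\abs{\nabla u_k}^2$ is legitimate; this is part of the definition of the Dirichlet Laplacian.
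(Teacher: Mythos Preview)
Your argument is correct and is essentially the Li--Yau proof given in the paper: both use Plancherel to write $\lambda_k=\int\xi^2\abs{\hat u_k}^2\,d\xi/(2\pi)^d$ and the Bessel bound $\sum_k\abs{\hat u_k(\xi)}^2\le\abs{\Omega}$ (in your normalization) as the key inputs. The only cosmetic difference is that the paper applies Jensen's inequality to each term $(\lambda-\lambda_k)_+$ before summing, whereas you first sum to obtain the exact identity $\sum_k(\lambda-\lambda_k)_+=\int(\lambda-\xi^2)\rho(\xi)\,d\xi/(2\pi)^d$ and then split the integrand according to the sign of $\lambda-\xi^2$; these two steps are equivalent.
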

\begin{proof}
Let $(\psi_k)_k\subset L^2(\Omega)$ be an orthonormal system of eigenfunctions corresponding to $(\lambda_k)_k$. In particular, $\psi_k\in H^1_0(\Omega)$. Then, since every $\psi_k$ can be extended by zero to a function $H^1(\IR^d)$, we have
$$\lambda_k=\int_{\Omega}\abs{\nabla \psi_k(x)}^2dx=\int_{\IR^d}\abs{\xi}^2\abs{\hat{\psi}_k(\xi)}^2d\xi.$$
Therefore
$$(\lambda-\lambda_k)_+=\left(\int_{\IR^d}(\lambda-\abs{\xi}^2)\abs{\hat{\psi}_k(\xi)}^2d\xi\right)_+\le \int_{\IR^d}\left(\lambda-\abs{\xi}^2\right)_+\abs{\hat{\psi}_k(\xi)}^2d\xi,$$
by Jensen's inequality, since $\abs{\hat{\psi}_k(\xi)}^2d\xi$ is a probability measure.
\begin{align*}
\sum_k (\lambda-\lambda_k)_+ &\le \sum_{\lambda_k<\lambda} \int_{\IR^d}\left(\lambda-\abs{\xi}^2\right)_+\abs{\hat{\psi}_k(\xi)}^2d\xi\\
&= \int_{\IR^d}\left(\lambda-\abs{\xi}^2\right)_+\sum_{\lambda_k<\lambda}\abs{\hat{\psi}_k(\xi)}^2d\xi\\
&\le \int_{\IR^d}\left(\lambda-\abs{\xi}^2\right)_+\sum_{k}\abs{\hat{\psi}_k(\xi)}^2d\xi.
\end{align*}
Note that 
$$\hat{\psi}_k(\xi)=\frac{1}{(2\pi)^{d/2}}\int_{\IR^d}e^{-ix\xi}\psi_k(x)dx=\frac{1}{(2\pi)^{d/2}}\int_{\Omega}e^{-i\xi x}\psi_k(x)dx=\inner{(2\pi)^{-d/2}e^{i\xi \cdot}}{\psi_k}_{L^2(\Omega)}.$$
Since the $\psi_k$ form an orthonormal basis of $L^2(\Omega)$, we obtain 
$$\sum_{k}\abs{\hat{\psi}_k(\xi)}^2
=\sum_{k}\abs{\inner{(2\pi)^{-d/2}e^{i\xi \cdot}}{\psi_k}_{L^2(\Omega)}}^2
=\norm{(2\pi)^{-d/2}e^{i\xi \cdot}}_{L^2(\Omega)}^2=(2\pi)^{-d}\abs{\Omega}.$$
This concludes the argument.
\end{proof}
\begin{cor}For the eigenvalues $\lambda_k$ of the Dirichlet Laplacian on a domain $\Omega$ of finite measure, the following asymptotic formula holds
$$\lim_{\lambda\to\infty}\lambda^{-1-d/2}\sum_k(\lambda-\lambda_k)_+=\abs{\Omega}\int_{\IR^d}\left(1-\abs{\xi}^2\right)_+\frac{d\xi}{(2\pi)^d}.$$
\end{cor}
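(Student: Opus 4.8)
The plan is to combine the lower bound from Proposition \ref{lapl} with the upper bound from Proposition \ref{upperE}, after putting the latter into the correct scaled form. Both inequalities have already been established, so this is a short deduction rather than a substantial argument.

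First I would rescale the right-hand side of Proposition \ref{upperE}. Substituting $\xi=\sqrt{\lambda}\,\eta$ in the integral $\int_{\IR^d}(\lambda-\xi^2)_+\,d\xi$ produces a factor $\lambda^{d/2}$ from the volume element and $\lambda$ from the integrand, so that
$$\int_{\IR^d}\left(\lambda-\xi^2\right)_+\frac{d\xi}{(2\pi)^d}=\lambda^{1+d/2}\int_{\IR^d}\left(1-\xi^2\right)_+\frac{d\xi}{(2\pi)^d}.$$
Dividing the inequality of Proposition \ref{upperE} by $\lambda^{1+d/2}$ and taking $\limsup_{\lambda\to\infty}$ then yields
$$\limsup_{\lambda\to\infty}\lambda^{-1-d/2}\sum_k(\lambda-\lambda_k)_+\le \abs{\Omega}\int_{\IR^d}\left(1-\xi^2\right)_+\frac{d\xi}{(2\pi)^d}.$$

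Next I would invoke Proposition \ref{lapl}, which gives the matching lower bound for the $\liminf$. Since $\liminf\le\limsup$ always, the two bounds force equality, so the limit exists and equals $\abs{\Omega}\int_{\IR^d}(1-\xi^2)_+\,\frac{d\xi}{(2\pi)^d}$, which is exactly the claimed formula.

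There is no real obstacle here; the only point requiring a moment's care is the homogeneity computation in the rescaling, and one should note that $\int_{\IR^d}(1-\xi^2)_+\,d\xi$ is finite so that the expression on the right is well defined and the dominated-convergence step implicit in passing from Proposition \ref{lapl} is legitimate.
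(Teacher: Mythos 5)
Your proposal is correct and follows exactly the route the paper intends: rescale the Li--Yau bound of Proposition \ref{upperE} by the substitution $\xi\mapsto\sqrt{\lambda}\,\xi$ to match the $\lambda^{1+d/2}$ normalization, then sandwich the quantity between the resulting $\limsup$ bound and the $\liminf$ bound of Proposition \ref{lapl}. The only superfluous remark is the appeal to a dominated-convergence step ``implicit in passing from Proposition \ref{lapl}'': that convergence argument already lives inside the proof of the proposition, so in the corollary itself nothing beyond the two inequalities and $\liminf\le\limsup$ is needed.
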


\section{The Hyperbolic Laplacian}
\subsection{Preliminaries}
On a Riemannian manifold $(M,g)$, the Laplace--Beltrami operator is locally given by
$$-\Delta_g=-\frac{1}{\sqrt{g}}\partial_{x^i}\left(\sqrt{g}g^{ij}\partial_{x^j}\right),$$
whereas the canonical Riemannian volume is given by
$$dV_g=\sqrt{g}dx^1 dx^2 ...\ dx^d.$$

Consider now the hyperbolic space in the half-plane model
$$M=\IH^d=\{(y,x)\in\IR\times\IR^{d-1}\colon\ y>0\},\ g=\frac{1}{y^2}(dy^2+dx^2).$$
The Laplace--Beltrami operator in this case is
$$-\Delta_g=-y^d\partial_y(y^{2-d}\partial_y)-y^2\Delta_x,$$
where $\Delta_x=\sum_{j=1}^{d-1}\partial_{x_j}^2$.
Performing the change of coordinates $y=e^t$, one obtains a unitary operator $L^2(\IH^d, y^{-d}dx\ dy)\to L^2(\IR^d, dx\ dt)$ and $-\Delta_g$ acts on $L^2(\IR^d, dx\ dt)$ as $-\partial_t^2-e^{2t}\Delta_x+(d-1)^2/4$. This unitary equivalence between $-\Delta_g$ and $-\partial_t^2-e^{2t}\Delta_x+(d-1)^2/4$ causes their eigenvalues to coincide.
In \cite{hyper} this correspondence was used to extend a class of Lieb--Thirring inequalities from $\IR^d$ to $\IH^d$. In the rest of this section we adopt the notation $\tilde{x}=(x_2,...,x_d)$ for $x\in\IR^d$.
\subsection{Lower Trace Bound}
Consider the operator 
$$H=-\partial_{x_1}^2-e^{2x_1}{\Delta_{\tilde{x}}}$$
where $\Delta_{\tilde{x}}=\sum_{j=2}^d \partial_{x_j}^2$ on the space $L^2(\IR^d)$. We are interested in its action on a coherent state of the form $\e(x)=e^{i\xi x}g(x-y)$.
\begin{lm}\label{symbol}
$$\inner{\e}{H\e}_{L^2(\IR^d)}=\xi_1^2+e^{2y_1}\abs{\tilde{\xi}}^2\int_{\IR^d}e^{2z_1}g(z)^2dz+e^{2y_1}\int_{\IR^d} e^{2z_1}\abs{{\nabla_{\tilde{z}}}g(z)}^2dz+\int_{\IR^d}(\partial_{z_1}g(z))^2 dz$$
\end{lm}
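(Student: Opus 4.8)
The plan is to compute $\inner{\e}{H\e}_{L^2(\IR^d)}$ by splitting $H$ into its two summands $-\partial_{x_1}^2$ and $-e^{2x_1}\Delta_{\tilde x}$ and treating each separately. For the first term, Lemma \ref{symbol1} applies directly and gives
$\inner{\e}{-\partial_{x_1}^2\e} = \xi_1^2 + \int_{\IR^d}(\partial_{z_1}g(z))^2\,dz$,
which already accounts for the first and last terms on the right-hand side of the claimed identity. So the real work is the mixed term $\inner{\e}{-e^{2x_1}\Delta_{\tilde x}\e}$, and the only genuine novelty compared to the Euclidean case is the presence of the weight $e^{2x_1}$.

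For the second term I would write $-e^{2x_1}\Delta_{\tilde x}\e = -\sum_{j=2}^d e^{2x_1}\partial_{x_j}^2\e$ and, rather than integrating by parts against the weight (which would produce extra terms since $e^{2x_1}$ does not commute with $\partial_{x_j}$ for $j\ge 2$ — actually it does commute, since the weight only involves $x_1$), simply use the expansion \eqref{tec} of $\partial_{x_j}^2\e$ with $j\ge 2$:
$\partial_{x_j}^2\e(x) = -\xi_j^2 e^{i\xi x}g(x-y) + 2i\xi_j e^{i\xi x}\partial_{x_j}g(x-y) + e^{i\xi x}\partial_{x_j}^2 g(x-y)$.
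Multiplying by $-e^{2x_1}$, pairing against $\e(x)=e^{i\xi x}g(x-y)$, and substituting $z = x - y$ (so that $e^{2x_1} = e^{2(z_1+y_1)} = e^{2y_1}e^{2z_1}$) turns each of the three pieces into $e^{2y_1}$ times an integral in $z$. The first piece yields $e^{2y_1}\xi_j^2 \int e^{2z_1}g(z)^2\,dz$; the middle piece, $-2i\xi_j e^{2y_1}\int e^{2z_1}\partial_{z_j}g(z)\,g(z)\,dz$, vanishes because $g$ is even in $z_j$ (for $j\ge 2$ the factor $e^{2z_1}$ is even in $z_j$, so the integrand $e^{2z_1}\partial_{z_j}(g^2)/2$ is odd in $z_j$); the third piece is $-e^{2y_1}\int e^{2z_1}\partial_{z_j}^2 g(z)\,g(z)\,dz$, which integration by parts in $z_j$ (the weight $e^{2z_1}$ being constant in $z_j$) converts to $e^{2y_1}\int e^{2z_1}(\partial_{z_j}g(z))^2\,dz$. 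Summing over $j=2,\dots,d$ gives exactly $e^{2y_1}\abs{\tilde\xi}^2\int e^{2z_1}g(z)^2\,dz + e^{2y_1}\int e^{2z_1}\abs{\nabla_{\tilde z}g(z)}^2\,dz$, completing the identity.

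The only point requiring a word of care — and the main obstacle, such as it is — is justifying that all these integrals converge and that the integration by parts has no boundary contributions; both are immediate from $g\in C_c^\infty(\IR^d)$, so that $e^{2z_1}$ and its product with any derivative of $g$ are bounded and compactly supported. One should also note that $\e$ lies in the form domain of $H$ (again clear from $g$ being smooth and compactly supported), so that the pairing $\inner{\e}{H\e}$ is well-defined. Beyond that the computation is entirely parallel to Lemma \ref{symbol1}, with the symmetry argument and the integration by parts applied inside the harmless multiplicative weight $e^{2y_1}e^{2z_1}$.
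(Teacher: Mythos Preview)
Your proposal is correct and follows essentially the same route as the paper: invoke Lemma~\ref{symbol1} for the $-\partial_{x_1}^2$ contribution, then for each $j\ge 2$ expand $\partial_{x_j}^2\e$ via \eqref{tec}, multiply by the weight $e^{2x_1}$, change variables $z=x-y$, kill the cross term by the symmetry of $g$ in $z_j$, and integrate by parts in $z_j$ (the weight $e^{2z_1}$ being inert in that variable). The paper organizes the same three-piece computation by first splitting the integral as $\int_{\IR}e^{2x_1}\int_{\IR^{d-1}}\cdots\,d\tilde x\,dx_1$ rather than substituting $z=x-y$ at the outset, but this is purely cosmetic.
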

\begin{proof}
The contribution of $-\partial^2_{x_1}$ was already determined in \ref{symbol1}.
We compute $\inner{\e}{-e^{2x_1}\partial_{x_j}^2\e}_{L^2(\IR^d)}$ for $j>1$. Equation \ref{tec} in the proof of Lemma \ref{symbol1} implies
\begin{align*}
\inner{\e}{-e^{2x_1}\partial_{x_j}^2\e}=&\xi_j^2\int_{\IR^d}e^{2x_1}(g(x-y))^2dx
+2i\xi_j \int_{\IR^d}e^{2x_1}\partial_{x_j}g(x-y)g(x-y)dx\\
&-\int_{\IR^d}e^{2x_1}\partial_{x_j}^2g(x-y)g(x-y)dx.\\
\end{align*}
The first term on the right equals
$$e^{2y_1}\xi_j^2\int_{\IR^d}e^{2(x_1-y_1)}(g(x-y))^2dx=e^{2y_1}\xi_j^2\int_{\IR^d}e^{2z_1}(g(z))^2dz.$$
For the second integral we find
$$\int_{\IR^d}e^{2x_1}\partial_{x_j}g(x-y)g(x-y)dx=\int_{\IR}e^{2x_1}\int_{\IR^{d-1}}\partial_{x_j}g(x-y)g(x-y) d\tilde{x} dx_1,$$
where the inner integral vanishes due to symmetry of $g$. Finally, for the third integral we find, again by splitting the integral, that
\begin{align*}
\int_{\IR^d}e^{2x_1}\partial_{x_j}^2g(x-y)g(x-y)dx
&=-e^{2y_1}\int_{\IR}e^{2(x_1-y_1)}\int_{\IR^{d-1}}\partial_{x_j}^2g(x-y)g(x-y)d\tilde{x}dx_1\\
&=e^{2y_1}\int_{\IR}e^{2(x_1-y_1)}\int_{\IR^{d-1}}(\partial_{x_j}g(x-y))^2d\tilde{x}dx_1\\
&=e^{2y_1}\int_{\IR^{d}}e^{2z_1}(\partial_{z_j}g(z))^2dz
\end{align*}
Summing over $j=2,...,d$ completes the argument.
\end{proof}

\begin{theorem}Let $\Omega\subset\IR^d$ be a bounded open set and let $\lambda_k$, $k\in\IN$ be the eigenvalues of the operator $H$ on $\Omega$ with Dirichlet boundary conditions. Then
$$\liminf_{\lambda\to\infty}\lambda^{-1-d/2}\sum_k (\lambda-{\lambda}_k)_+\ge \int_{\IR^d\times \Omega}\left(1 - \xi_1^2-e^{2y_1}\abs{\tilde{\xi}}^2\right)_+\frac{d\xi dy}{(2\pi)^d}.$$
\end{theorem}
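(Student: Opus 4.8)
The plan is to mimic exactly the proof of Proposition~\ref{lapl}, replacing the euclidean symbol computation by Lemma~\ref{symbol}. First I would introduce the interior domain $\Omega_\epsilon$ and the rescaled cutoff $g^\epsilon(z)=\epsilon^{-d/2}g(z/\epsilon)$, yielding the coherent states $\e^\epsilon(x)=e^{i\xi x}g^\epsilon(x-y)$ which, by Proposition~\ref{domain}, form an overfilling family in $L^2(\Omega_\epsilon)$. Denoting by $\tilde\lambda_k$ the Dirichlet eigenvalues of $H$ on $\Omega_\epsilon$, domain monotonicity (valid since $H$ is a non-negative self-adjoint operator with form domain $H^1_0$ and the form is monotone under domain inclusion) gives $\lambda_k\le\tilde\lambda_k$, hence $(\lambda-\lambda_k)_+\ge(\lambda-\tilde\lambda_k)_+$. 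Since $(\lambda-H)_+$ is a non-negative trace-class operator on $L^2(\Omega_\epsilon)$, Theorem~\ref{trace} yields
$$\sum_k(\lambda-\lambda_k)_+\ge\sum_k(\lambda-\tilde\lambda_k)_+=\int_{\IR^d\times\Omega}\inner{\e^\epsilon}{(\lambda-H)_+\e^\epsilon}_{L^2(\Omega_\epsilon)}\frac{d\xi dy}{(2\pi)^d}.$$

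Next I would restrict the $y$-integral to $\Omega_{2\epsilon}$, where $\e^\epsilon$ is supported inside $\Omega_\epsilon$, so that the $L^2(\Omega_\epsilon)$ inner product coincides with the $L^2(\IR^d)$ one and $H$ may be taken as the full-space operator. Then I apply Jensen's inequality to the spectral measure of $H$ (using convexity of $\mu\mapsto\mu_+$) to get
$$\inner{\e^\epsilon}{(\lambda-H)_+\e^\epsilon}_{L^2(\IR^d)}\ge\left(\lambda-\inner{\e^\epsilon}{H\e^\epsilon}_{L^2(\IR^d)}\right)_+,$$
and Lemma~\ref{symbol} (applied with $g^\epsilon$ in place of $g$) evaluates the symbol as
$$\inner{\e^\epsilon}{H\e^\epsilon}=\xi_1^2+e^{2y_1}\abs{\tilde\xi}^2\int e^{2z_1}g^\epsilon(z)^2dz+e^{2y_1}\int e^{2z_1}\abs{\nabla_{\tilde z}g^\epsilon(z)}^2dz+\int(\partial_{z_1}g^\epsilon(z))^2dz.$$
The key observation is that, because $g^\epsilon$ is supported in $\abs{z}<\epsilon$, one has $e^{2z_1}=1+O(\epsilon)$ uniformly on its support, so $\int e^{2z_1}g^\epsilon(z)^2dz=1+O(\epsilon)$; moreover $\int\abs{\nabla g^\epsilon}^2dz=\epsilon^{-2}\int\abs{\nabla g}^2dz=:C\epsilon^{-2}$ is a constant independent of $\lambda$. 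Hence
$$\inner{\e^\epsilon}{H\e^\epsilon}\le\xi_1^2+e^{2y_1}\abs{\tilde\xi}^2(1+C_1\epsilon)+C_2\epsilon^{-2},$$
and combining the above gives, for $y\in\Omega_{2\epsilon}$,
$$\sum_k(\lambda-\lambda_k)_+\ge\int_{\IR^d\times\Omega_{2\epsilon}}\left(\lambda-\xi_1^2-e^{2y_1}\abs{\tilde\xi}^2(1+C_1\epsilon)-C_2\epsilon^{-2}\right)_+\frac{d\xi dy}{(2\pi)^d}.$$

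Finally I would scale out $\lambda$: writing the right-hand side as $\lambda^{1+d/2}$ times
$$\int_{\IR^d\times\Omega_{2\epsilon}}\left(1-\xi_1^2-e^{2y_1}\abs{\tilde\xi}^2(1+C_1\epsilon)-C_2\epsilon^{-2}\lambda^{-1}\right)_+\frac{d\xi dy}{(2\pi)^d},$$
where I have used the change of variables $\xi\mapsto\sqrt{\lambda}\,\xi$ (the Jacobian $\lambda^{d/2}$ together with the remaining factor $\lambda$ from the bracket produces $\lambda^{1+d/2}$; note $e^{2y_1}\abs{\tilde\xi}^2$ scales the same way as $\xi_1^2$). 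Taking $\liminf_{\lambda\to\infty}$ and using dominated convergence (the integrand is bounded by the integrable function $(1-\xi_1^2-e^{2y_1}\abs{\tilde\xi}^2)_+\cdot\mathbf 1_\Omega$, which is integrable since $\Omega$ is bounded so $e^{2y_1}$ is bounded below on $\Omega$ by a positive constant, forcing compact $\xi$-support) yields
$$\liminf_{\lambda\to\infty}\lambda^{-1-d/2}\sum_k(\lambda-\lambda_k)_+\ge\int_{\IR^d\times\Omega_{2\epsilon}}\left(1-\xi_1^2-e^{2y_1}\abs{\tilde\xi}^2(1+C_1\epsilon)\right)_+\frac{d\xi dy}{(2\pi)^d}.$$
Letting $\epsilon\to0$ and again invoking dominated/monotone convergence on both the shrinking domain $\Omega_{2\epsilon}\uparrow\Omega$ and the factor $(1+C_1\epsilon)\to1$ completes the proof. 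I expect the main obstacle to be the bookkeeping of the two independent limits $\lambda\to\infty$ and $\epsilon\to0$ and verifying integrability of the limiting symbol; the crucial point making this work is that $\Omega$ is bounded, so $\inf_{y\in\Omega}e^{2y_1}>0$ and the region $\{1-\xi_1^2-e^{2y_1}\abs{\tilde\xi}^2>0\}$ has finite measure, which is precisely what keeps the right-hand integral finite and lets dominated convergence apply.
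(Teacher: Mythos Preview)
Your proposal is correct and follows essentially the same route as the paper's proof: domain monotonicity to pass to $\Omega_\epsilon$, Theorem~\ref{trace} and restriction to $y\in\Omega_{2\epsilon}$, Jensen's inequality for the spectral measure, Lemma~\ref{symbol} for the symbol, then the scaling $\xi\mapsto\sqrt\lambda\,\xi$ followed by two successive applications of dominated convergence. The only cosmetic difference is that the paper keeps the three integrals $c_1^\epsilon,c_2^\epsilon,c_3^\epsilon$ as named constants (noting $c_3^\epsilon\to1$), whereas you bound them immediately by $1+C_1\epsilon$ and $C_2\epsilon^{-2}$; your added remarks on integrability (that $\inf_{y\in\Omega}e^{2y_1}>0$ forces compact $\xi$-support of the limiting integrand) make explicit the justification of dominated convergence that the paper leaves implicit.
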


\begin{proof}

%
%
Let us again consider a $g$ supported in the unit ball and construct $\e^\epsilon$ accordingly.
By the same argument as for the Dirichlet Laplacian in the proof of Proposition \ref{lapl}, we have
$$\sum_k (\lambda-{\lambda}_k)_+\ge \int_{\IR^d\times \Omega_{2\epsilon}}\left(\inner{\e^\epsilon}{(\lambda-H)\e^\epsilon}_{L^2(\IR^d)}\right)_+\frac{d\xi dy}{(2\pi)^d}.$$
Using the expression obtained for the symbol of $H$ in \ref{symbol} then gives
\begin{equation}\label{low}
\sum_k (\lambda-{\lambda}_k)_+\ge \int_{\IR^d\times {\Omega_{2\epsilon}}}\left(\lambda - \xi_1^2-e^{2y_1}\abs{\tilde{\xi}}^2c^\epsilon_3-e^{2y_1}c^\epsilon_2-c^\epsilon_1 \right)_+\frac{d\xi dy}{(2\pi)^d},
\end{equation}
where
\begin{align*}c^\epsilon_1&=\int_{\IR^d}(\partial_{z_1}g^\epsilon(z))^2 dz\\
c^\epsilon_2&=\int_{\IR^d}e^{2z_1}\abs{{\nabla_{\tilde{z}}}g^\epsilon(z)}^2dz\\
c^\epsilon_3&=\int_{\IR^d}e^{2z_1}g^\epsilon(z)^2dz.
\end{align*}
Note that since $g(z)^2$ is a mollifier, we have $c_3^\epsilon\to 1$ as $\epsilon\to 0$.
After a change of coordinates, we obtain
\begin{equation}\label{lowerH}
\sum_k (\lambda-{\lambda}_k)_+\ge\lambda^{1+d/2}\int_{\IR^d\times \Omega_{2\epsilon}}\left(1- \xi_1^2-e^{2y_1}\abs{\tilde{\xi}}^2c^\epsilon_3-\frac{1}{\lambda}e^{2y_1}c^\epsilon_2-\frac{1}{\lambda}c^\epsilon_1 \right)_+\frac{d\xi dy}{(2\pi)^d}.
\end{equation}

Dominated convergence yields
$$\liminf_{\lambda\to\infty}\lambda^{-1-d/2}\sum_k (\lambda-{\lambda}_k)_+\ge \int_{\IR^d\times \Omega_{2\epsilon}}\left(1- \xi_1^2-e^{2y_1}\abs{\tilde{\xi}}^2c^\epsilon_3\right)_+\frac{d\xi dy}{(2\pi)^d}.$$
Letting $\epsilon\to 0$ and using dominated convergence concludes the argument.
\end{proof}

\subsection{Upper Trace Bound}
Before proving an upper bound on the trace of the hyperbolic Laplacian, we need to consider some further technicalities concerning coherent states.
Consider again the general situation where $g\in C^\infty_c(\IR^d)$ is real-valued and such that $g(-x)=g(x)$ and $\int g(x)^2dx=1$.
As before, define $\e(x)=e^{i\xi x}g(x-y)$.
The operator $\Phi\colon L^2(\IR^d, dx)\to L^2(\IR^{2d}, (2\pi)^{-d}dyd\xi)$ given by $(\Phi \psi)(y,\xi)=\inner{\e}{\psi}_{L^2(\IR^d)}$ is called the coherent state transform. Since $(\e)$ is overfilling, $\Phi$ is an isometry, i.e. $\Phi^*\Phi=I_{L^2(\IR^d)}$. 


In this section's theorem we shall make use of the two following lemmas:
\begin{lm}
Let $\psi\in H^1(\IR^d)$. Then
$$\inner{\psi}{\Phi^* \xi_1^2\Phi\psi}=\norm{\partial_{x_1}\psi}^2+\int_{\IR^d}(\partial_{z_1}g(z))^2 dz \norm{\psi}^2.$$
\end{lm}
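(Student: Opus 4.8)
The plan is to compute the quantity $\inner{\psi}{\Phi^* \xi_1^2 \Phi\psi}$ directly by unwinding the definition of the coherent state transform. First I would write
$$\inner{\psi}{\Phi^*\xi_1^2\Phi\psi} = \int_{\IR^{2d}} \xi_1^2 \, \abs{\inner{\e}{\psi}_{L^2(\IR^d)}}^2 \, \frac{d\xi\,dy}{(2\pi)^d},$$
using that $\Phi$ is an isometry onto its image, so $\inner{\psi}{\Phi^*(\xi_1^2\Phi\psi)} = \inner{\Phi\psi}{\xi_1^2\Phi\psi}_{L^2(\IR^{2d})}$. The key observation, already used in the proof that $(\e)$ is overfilling, is that $\xi \mapsto \inner{\e}{\psi}_{L^2(\IR^d)}$ is, up to the normalizing factor $(2\pi)^{d/2}$, the Fourier transform of the function $x \mapsto g(x-y)\psi(x)$. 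The factor $\xi_1^2$ therefore corresponds on the spatial side to $-\partial_{x_1}^2$ acting on $g(x-y)\psi(x)$: by Plancherel applied to this product, for fixed $y$,
$$\int_{\IR^d} \xi_1^2 \, \abs{\inner{\e}{\psi}}^2 \, d\xi = (2\pi)^d \int_{\IR^d} \abs{\partial_{x_1}\bigl(g(x-y)\psi(x)\bigr)}^2 \, dx.$$

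Next I would integrate this identity over $y\in\IR^d$ against $(2\pi)^{-d}dy$, which cancels the $(2\pi)^d$, and expand the product rule: $\partial_{x_1}(g(x-y)\psi(x)) = (\partial_{x_1}g)(x-y)\psi(x) + g(x-y)(\partial_{x_1}\psi)(x)$. Squaring the modulus gives three terms. The cross term, after integrating in $y$ first, is $2\Re \int \overline{(\partial_{x_1}g)(x-y)\psi(x)}\, g(x-y)(\partial_{x_1}\psi)(x)$; integrating in $y$ produces $\int (\partial_{z_1}g)(z) g(z)\,dz$ as a constant factor, which vanishes since $g$ is even (so $(\partial_{z_1}g)g$ is odd, exactly as in the proof of Lemma \ref{symbol1}). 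The term with $g(x-y)(\partial_{x_1}\psi)(x)$ integrates in $y$ to $\int g(z)^2\,dz\cdot \abs{\partial_{x_1}\psi(x)}^2 = \abs{\partial_{x_1}\psi(x)}^2$ by the normalization of $g$, and then integrating in $x$ gives $\norm{\partial_{x_1}\psi}^2$. The term with $(\partial_{x_1}g)(x-y)\psi(x)$ integrates in $y$ to $\int(\partial_{z_1}g(z))^2\,dz\cdot\abs{\psi(x)}^2$, and then in $x$ to $\int(\partial_{z_1}g(z))^2\,dz \cdot \norm{\psi}^2$. Summing the three contributions yields exactly the claimed formula.

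An alternative, perhaps cleaner, route is to recognize $\Phi^*\xi_1^2\Phi$ as a pseudodifferential-type operator and instead verify the identity by testing against $\psi$ using the reproducing formula $\Phi^*\Psi(x) = \int_{\IR^{2d}}\e(x)\Psi(\xi,y)\,\frac{d\xi\,dy}{(2\pi)^d}$ together with Lemma \ref{symbol1}; but this essentially reduces to the same Fourier/Plancherel computation, so I would present the direct argument above. The compact support and smoothness of $\psi$ guarantee that all integrals converge absolutely and that Fubini's theorem applies when swapping the order of the $x$ and $y$ integrations, so no further justification is needed there.

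I do not expect a genuine obstacle: the only point requiring care is the bookkeeping of the product rule and making sure the cross term is correctly identified as vanishing by parity of $g$, which is the same mechanism already exploited in Lemma \ref{symbol1}. The main conceptual step is simply the reinterpretation of multiplication by $\xi_1^2$ on the transform side as the differential operator $-\partial_{x_1}^2$ acting on the windowed function $g(\cdot-y)\psi$, after which Plancherel and the normalization of $g$ do all the work.
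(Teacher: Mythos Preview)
Your proposal is correct and follows essentially the same route as the paper: both arguments reduce $\inner{\psi}{\Phi^*\xi_1^2\Phi\psi}$ to $\int_{\IR^{2d}}\abs{\partial_{x_1}(g(x-y)\psi(x))}^2\,dy\,dx$, then expand via the product rule and kill the cross term by the parity of $g$. The only cosmetic difference is that the paper writes out the full quadruple integral, integrates by parts twice in $x$ and $w$, and then invokes $\int e^{i\xi(x-w)}\,d\xi=(2\pi)^d\delta(x-w)$ to collapse it, whereas you package that same computation as a single application of Plancherel to $x\mapsto g(x-y)\psi(x)$ for fixed $y$; your version is slightly cleaner but mathematically identical.
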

\begin{proof}
Let us introduce some notation. We write $g_y(x)=g(x-y)$. The Fourier transform is denoted by $\cF\colon L^2(\IR^d)\to L^2(\IR^d)$ with the following convention for $f\in L^2(\IR^d)\cap L^1(\IR^d)$:
$$\cF f (\xi) = (2\pi)^{-d/2}\int_{\IR^d}e^{-ix\xi}f(x)dx.$$
Finally, we write $X=L^2(\IR^d, dx)$ and $Y=L^2(\IR^{2d}, (2\pi)^{-d}dyd\xi)$ for brevity.

The coherent state transform of $\psi$ is given by
$$\Phi\psi(y,\xi)=\inner{\e}{\psi}_X=\int_{\IR^d}e^{-ix\xi}g_y(x)f(x)dx=(2\pi)^{d/2}\cF(g_y f).$$
Consequently,
\begin{align*}
\inner{\psi}{\Phi^* \xi_1^2\Phi\psi}_X&=\inner{\xi_1\Phi\psi}{ \xi_1\Phi\psi}_Y\\
&=(2\pi)^{d}\norm{\xi_1\cF(g_y \psi)}^2_Y\\
&=(2\pi)^d\norm{\cF(\partial_{x_1}(g_y \psi))}^2_Y\\
&=\int_{\IR^{2d}} \abs{\cF(\partial_{x_1}(g_y \psi))(\xi)}^2 d\xi dy\\
&=\int_{\IR^{d}}\int_{\IR^{d}} \abs{\partial_{x_1}(g_y \psi)(x)}^2 dx dy\\
&=\int_{\IR^{d}}\int_{\IR^{d}}\abs{\partial_{x_1}(g_y \psi)(x)}^2 dy dx
\end{align*}
For the integrand we find
$$\abs{\partial_{x_1}(g(x-y){\psi(x)})}^2=\abs{\partial_{x_1}g(x-y){\psi(x)}}^2+\abs{g(x-y)\partial_{x_1}{\psi(x)}}^2+2\Re \overline{{\partial_{x_1}g(x-y){\psi(x)}}}g(x-y)\partial_{x_1}{\psi(x)}.$$
Since $g(-z)=g(z)$, it is $(\partial_{z_1}g)(-z)=-(\partial_{z_1}g)(z)$ and thus
$$\int_{\IR^d}{{\partial_{x_1}g(x-y)}}g(x-y)dy=\int_{\IR^d}{{\partial_{z_1}g(z)}}g(z)dz=0.$$
Finally,
\begin{align*}
\inner{\psi}{\Phi^* \xi_1^2\Phi\psi}&=\int_{\IR^{2d}}\left( \abs{\partial_{x_1}g(x-y){\psi(x)}}^2+\abs{g(x-y)\partial_{x_1}{\psi(x)}}^2\right){dy dx}\\
&=\int_{\IR^{d}} \left(\int_{\IR^d}\abs{\partial_{x_1}g(x-y)}^2dy\right)\abs{ \psi(x)}^2dx+\int_{\IR^{d}} \left(\int_{\IR^{d}}  \abs{g(x-y)}^2dy\right)\abs{\partial_{x_1}\psi(x)}^2 dx\\
&=\int_{\IR^d}\abs{\partial_{z_1}g(z)}^2dz\int_{\IR^{d}} \abs{ \psi(x)}^2dx+\int_{\IR^{d}}  \abs{g(z)}^2dz\int_{\IR^{d}}\abs{\partial_{x_1}\psi(x)}^2 dx\\
&=\int_{\IR^{d}}(\partial_{z_1}g(z))^2 dz\norm{\psi}^2+\norm{\partial_{x_1}{\psi}}^2.\qedhere
\end{align*}
\end{proof}
\begin{lm}Assume that the function in the definition of coherent states can be decomposed as $g(z)=g_1(z_1)\tilde{g}(\tilde{z})$ where
$g_1$ and $\tilde{g}$ are symmetric in the sense that $g_1(-x_1)=g_1(x_1)$ and $\tilde{g}(-\tilde{x})=\tilde{g}(\tilde{x})$, compactly supported, and such that
$\int_\IR g_1(z_1)^2 dz_1=\int_{\IR^{d-1}} \tilde{g}(\tilde{z})^2 d\tilde{z}=1$.
Let $\psi\in H^1(\IR^d)$ have compact support. Then
$$\inner{\psi}{\Phi^* e^{2y_1}\abs{\tilde{\xi}}^2\Phi\psi}=\int_{\IR^d}e^{2z_1}\abs{{\nabla_{\tilde{z}}}g(z)}^2dz\norm{e^{x_1}\psi}^2+\int_{\IR^d}e^{2z_1}g(z)^2dz\norm{e^{x_1}\nabla_{\tilde{x}}\psi}^2.$$
\end{lm}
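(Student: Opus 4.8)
The plan is to follow the proof of the preceding lemma almost line for line, the new features being the weight $e^{2y_1}$ and the sum over $j\in\{2,\dots,d\}$ hidden in $\abs{\tilde\xi}^2=\sum_{j=2}^d\xi_j^2$.

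First I would expand
$$\inner{\psi}{\Phi^*e^{2y_1}\abs{\tilde\xi}^2\Phi\psi}=\int_{\IR^{4d}}e^{i\xi x}g(x-y)\,e^{2y_1}\abs{\tilde\xi}^2\,e^{-i\xi w}g(w-y)\psi(w)\overline{\psi(x)}\frac{dw\,d\xi\,dy\,dx}{(2\pi)^d},$$
and for each $j\ge 2$ use $\xi_j^2e^{i\xi x}e^{-i\xi w}=\partial_{x_j}(e^{i\xi x})\,\partial_{w_j}(e^{-i\xi w})$. Because $e^{2y_1}$ is independent of both $x$ and $w$, integrating by parts once in $x_j$ and once in $w_j$ transfers these derivatives onto $g(x-y)\overline{\psi(x)}$ and $g(w-y)\psi(w)$; then $\int_{\IR^d}e^{i\xi(x-w)}\,d\xi=(2\pi)^d\delta(x-w)$ collapses the $\xi$- and $w$-integrals exactly as in the previous lemma, giving
$$\inner{\psi}{\Phi^*e^{2y_1}\abs{\tilde\xi}^2\Phi\psi}=\int_{\IR^{2d}}e^{2y_1}\bigabs{\nabla_{\tilde x}\bigl(g(x-y)\psi(x)\bigr)}^2\,dy\,dx.$$

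Next I would expand the integrand by the Leibniz rule, $\nabla_{\tilde x}\bigl(g(x-y)\psi(x)\bigr)=(\tilde\nabla g)(x-y)\,\psi(x)+g(x-y)\,\nabla_{\tilde x}\psi(x)$. This produces the two diagonal terms appearing in the statement together with a cross term, which after the substitution $z=x-y$ in the $y$-integral (so that $e^{2y_1}=e^{2x_1}e^{-2z_1}$) is, for each $j\ge2$, a multiple of $\int_{\IR^d}e^{-2z_1}\partial_{z_j}g(z)\,g(z)\,dz=\tfrac12\int_{\IR^d}e^{-2z_1}\partial_{z_j}\bigl(g(z)^2\bigr)\,dz$; since the weight $e^{-2z_1}$ does not depend on $z_j$ for $j\ge2$, integrating in $z_j$ first shows this vanishes (the factorization $g=g_1\tilde g$ makes the vanishing immediate, as then $\int_{\IR^{d-1}}\partial_{z_j}\tilde g\,\tilde g\,d\tilde z=0$). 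Hence only the diagonal terms remain.

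Finally I would compute each diagonal term by Fubini and the same substitution $z=x-y$, which again turns $e^{2y_1}$ into $e^{2x_1}e^{-2z_1}$ in the inner integral. Since $g$ is symmetric, $g^2$ and $\abs{\tilde\nabla g}^2$ are even functions (the latter because each $\partial_{z_j}g$ is odd); substituting $z\mapsto-z$ therefore replaces $e^{-2z_1}$ by $e^{2z_1}$, so the first diagonal term becomes $\int_{\IR^d}e^{2z_1}\abs{\tilde\nabla g(z)}^2\,dz\cdot\norm{e^{x_1}\psi}^2$ and the second becomes $\int_{\IR^d}e^{2z_1}g(z)^2\,dz\cdot\norm{e^{x_1}\nabla_{\tilde x}\psi}^2$, which is the asserted identity. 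I expect the only genuinely delicate step to be, as in the previous lemma, the rigorous justification of the two integrations by parts and of the replacement of the oscillatory $\xi$-integral by a Dirac mass; with $\psi\in C^\infty_c(\IR^d)$ and $g\in C^\infty_c(\IR^d)$ all the manipulations are legitimate, but this should be spelled out (e.g.\ by a standard approximation and Fubini argument). Everything after that collapse is a routine change of variables and parity computation.
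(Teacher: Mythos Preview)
Your proof is correct and follows essentially the same route as the paper: expand, integrate by parts in $x_j$ and $w_j$, collapse the $\xi$-integral to a delta, then expand by Leibniz and handle the three resulting terms. The only minor difference is that for the cross term you observe $\int_{\IR^d}e^{-2z_1}\partial_{z_j}(g^2)\,dz=0$ by integrating first in $z_j$ (using only compact support), whereas the paper invokes the factorization $g=g_1\tilde g$ to split the integral; your argument is slightly more elementary and in fact shows the product hypothesis is not needed for this lemma.
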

\begin{proof}
We proceed in a similar way as in the proof of the lemma above. Adopting the same notation as before, we find for $j=2,...,d$ that
\begin{align*}
    \inner{\psi}{\Phi^* e^{2y_1}\xi_j^2\Phi\psi}_X&=\inner{e^{y_1}\xi_1\Phi\psi}{ e^{y_1}\xi_j\Phi\psi}_Y\\
&=(2\pi)^{d}\norm{e^{y_1}\xi_j\cF(g_y \psi)}^2_Y\\
&=(2\pi)^d\norm{e^{y_1}\cF(\partial_{x_j}(g_y \psi))}^2_Y\\
&=\int_{\IR^{2d}} e^{2y_1}\abs{\cF(\partial_{x_j}(g_y \psi))(\xi)}^2 d\xi dy\\
&=\int_{\IR^{d}} e^{2y_1} \int_{\IR^{d}}\abs{\cF(\partial_{x_j}(g_y \psi))(\xi)}^2 d\xi\ dy\\
&=\int_{\IR^{d}} e^{2y_1} \int_{\IR^{d}}\abs{(\partial_{x_j}(g_y \psi))(\xi)}^2 dx\ dy\\
&=\int_{\IR^{d}}\int_{\IR^{d}}e^{2y_1} \abs{\partial_{x_j}(g_y \psi)(x)}^2 dy dx.
\end{align*}
Summing over $j$ we obtain
\begin{align*}
\inner{\psi}{\Phi^* e^{2y_1}\abs{\tilde{\xi}}^2\Phi\psi}
&=\int_{\IR^{2d}}e^{2y_1}\abs{\nabla_{\tilde{x}}(g(x-y){\psi(x)})}^2{dy dx}.
\end{align*}
Note that
\begin{align*}
\abs{\nabla_{\tilde{x}}(g(x-y){\psi(x)})}^2&=\abs{\nabla_{\tilde{x}}g(x-y)}^2\abs{\psi(x)}^2+\abs{g(x-y)}^2\abs{\nabla_{\tilde{x}}\psi(x)}^2\\
&+\sum_{j=2}^d 2\Re \overline{{\partial_{x_j}g(x-y){\psi(x)}}}g(x-y)\partial_{x_j}{\psi(x)},
\end{align*}
and that
$$\int_{\IR^d}e^{2y_1}{{\partial_{x_j}g(x-y)}}g(x-y)dy=\int_{\IR}e^{2y_1} g_1(x_1-y_1)^2dy_1\int_{\IR^{d-1}}{{\partial_{x_j}\tilde{g}(\tilde{x}-\tilde{y})}}\tilde{g}(\tilde{x}-\tilde{y})d\tilde{y},$$
where the second integral on the right vanishes due to the symmetry of $\tilde{g}$. Finally
\begin{align*}
\int_{\IR^{2d}}e^{2y_1}\abs{\nabla_{\tilde{x}}g(x-y)}^2\abs{\psi(x)}^2dxdy&=\int_{\IR^{2d}}e^{2(y_1-x)}\abs{\nabla_{\tilde{x}}g(x-y)}^2e^{2x_1}\abs{\psi(x)}^2dxdy\\
&=\int_{\IR^{d}}e^{2z}\abs{\nabla_{\tilde{z}}g(z)}^2dz\int_{\IR^{d}}e^{2x_1}\abs{\psi(x)}^2dx,
\end{align*}
and
\begin{align*}
\int_{\IR^{2d}}e^{2y_1}\abs{g(x-y)}^2\abs{\nabla_{\tilde{x}}\psi(x)}^2dxdy&=\int_{\IR^{2d}}e^{2(y_1-x_1)}\abs{g(x-y)}^2e^{2x_1}\abs{\nabla_{\tilde{x}}\psi(x)}^2dxdy\\
&=\int_{\IR^{d}}e^{2z_1}\abs{g(z)}^2dz\int_{\IR^{d}}e^{2x_1}\abs{\nabla_{\tilde{x}}\psi(x)}^2dx.
\end{align*}

This concludes the proof.
\end{proof}
We are now ready to state the main theorem of this subsection.
\begin{theorem}Let $\Omega\subset\IR^d$ be a bounded open set and let $\lambda_k$, $k\in\IN$ be the eigenvalues of the operator $H$ on $\Omega$ with Dirichlet boundary conditions. Then
$$\limsup_{\lambda\to\infty}\lambda^{-1-d/2}\sum_k (\lambda-{\lambda}_k)_+\le \int_{\IR^d\times \Omega}\left(1 - \xi_1^2-e^{2y_1}\abs{\tilde{\xi}}^2\right)_+\frac{d\xi dy}{(2\pi)^d}$$
\end{theorem}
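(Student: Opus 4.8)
The plan is to transplant the Li--Yau argument of Proposition~\ref{upperE} to phase space by means of the coherent state transform, with the two lemmas of this subsection playing the role of Plancherel's theorem. First I would fix a factorised weight $g(z)=g_1(z_1)\tilde g(\tilde z)$ as in the second of those lemmas, supported in $B_1:=\{z\in\IR^d\colon\abs{z}<1\}$, and for $\epsilon>0$ work with the coherent state transform $\Phi$ built from $g^\epsilon$, with states $\e^\epsilon$ and with the constants $c_1^\epsilon,c_2^\epsilon,c_3^\epsilon$ of the lower-bound proof. Let $\gamma=\mathbf{1}_{(-\infty,\lambda)}(H)$ be the spectral projection of the Dirichlet operator in $\Omega$, a finite-rank operator, and let $(\psi_k)$ be an orthonormal basis of eigenfunctions with $\psi_k\in H^1_0(\Omega)$, so that $\sum_k(\lambda-\lambda_k)_+=\lambda\tr\gamma-\tr(H\gamma)$. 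Everything then reduces to bounding $\tr(H\gamma)$ from below by a phase-space integral of the classical symbol $\xi_1^2+e^{2y_1}\abs{\tilde\xi}^2$.

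The substantial step is an exact symbolic identity. An integration by parts gives $\inner{\psi}{H\psi}=\norm{\partial_{x_1}\psi}^2+\norm{e^{x_1}\nabla_{\tilde x}\psi}^2$ for $\psi\in C^\infty_c(\IR^d)$, while the two lemmas express $\norm{\partial_{x_1}\psi}^2$ and $\norm{e^{x_1}\nabla_{\tilde x}\psi}^2$ through $\Phi^*\xi_1^2\Phi$ and $\Phi^* e^{2y_1}\abs{\tilde\xi}^2\Phi$ respectively. Eliminating those two quantities — using linearity of $m\mapsto\Phi^* m\Phi$ — I would arrive at
$$\inner{\psi}{H\psi}=\inner{\psi}{\Phi^* h^\epsilon\Phi\psi}-c_1^\epsilon\norm{\psi}^2-\frac{c_2^\epsilon}{c_3^\epsilon}\norm{e^{x_1}\psi}^2,\qquad h^\epsilon(\xi,y):=\xi_1^2+\frac{1}{c_3^\epsilon}e^{2y_1}\abs{\tilde\xi}^2,$$
valid for every $\psi\in C^\infty_c(\IR^d)$. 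The factor $(c_3^\epsilon)^{-1}$ in the second block of $h^\epsilon$ is exactly what cancels the $\norm{e^{x_1}\nabla_{\tilde x}\psi}^2$ term produced by the second lemma, so that only zeroth-order remainders survive. Since $\Omega$ is bounded there is a constant $C_\Omega$ with $e^{2x_1}\le C_\Omega$ on $\Omega$, and all terms above are continuous in the $H^1$-norm on $\Omega$; hence by density of $C^\infty_c(\Omega)$ in $H^1_0(\Omega)$ the identity, and with it the inequality
$$\inner{\psi}{H\psi}\ \ge\ \inner{\psi}{\Phi^* h^\epsilon\Phi\psi}-C_\epsilon\norm{\psi}^2,\qquad C_\epsilon:=c_1^\epsilon+C_\Omega\,\frac{c_2^\epsilon}{c_3^\epsilon},$$
extends to all $\psi\in H^1_0(\Omega)$.

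Next I would apply this to each $\psi_k$ with $\lambda_k<\lambda$, sum, interchange the non-negative sum with the integral, and use $\sum_{\lambda_k<\lambda}\abs{\inner{\e^\epsilon}{\psi_k}}^2=\inner{\e^\epsilon}{\gamma\e^\epsilon}$ together with Theorem~\ref{trace} applied to $\gamma$; this gives
$$\sum_k(\lambda-\lambda_k)_+=\lambda\tr\gamma-\tr(H\gamma)\ \le\ \int_{\IR^{2d}}\bigl(\lambda+C_\epsilon-h^\epsilon(\xi,y)\bigr)\inner{\e^\epsilon}{\gamma\e^\epsilon}\,\frac{d\xi\,dy}{(2\pi)^d}.$$
Because the range of $\gamma$ lies in $L^2(\Omega)$ and $0\le\gamma\le 1$, one has $0\le\inner{\e^\epsilon}{\gamma\e^\epsilon}\le\norm{\e^\epsilon}_{L^2(\Omega)}^2=m^\epsilon(y):=\int_\Omega g^\epsilon(x-y)^2\,dx$, where $0\le m^\epsilon\le 1$ and $m^\epsilon$ is supported in $\Omega+B_1$. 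Replacing $\inner{\e^\epsilon}{\gamma\e^\epsilon}$ by $m^\epsilon(y)$ where the bracket is positive and computing the $\xi$-integral by the substitution $\xi_1=\sqrt{\lambda+C_\epsilon}\,\eta_1$, $\tilde\xi=\sqrt{c_3^\epsilon(\lambda+C_\epsilon)}\,\tilde\eta$, turns the bound into
$$\sum_k(\lambda-\lambda_k)_+\ \le\ (c_3^\epsilon)^{(d-1)/2}(\lambda+C_\epsilon)^{1+d/2}\int_{\IR^d}m^\epsilon(y)\bigl(1-\eta_1^2-e^{2y_1}\abs{\tilde\eta}^2\bigr)_+\frac{d\eta\,dy}{(2\pi)^d}.$$

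To finish I would take the two limits in the right order: divide by $\lambda^{1+d/2}$ and let $\lambda\to\infty$ with $\epsilon$ fixed, so that $C_\epsilon/\lambda\to 0$, and only afterwards let $\epsilon\to 0$. Since the functions $g^\epsilon(z)^2$ form an approximate identity, $c_3^\epsilon\to 1$ and $m^\epsilon\to\mathbf{1}_\Omega$ almost everywhere as $\epsilon\to 0$, while $0\le m^\epsilon\le\mathbf{1}_{\Omega+B_1}$ and the integrand is bounded and, for $y$ in the bounded set $\Omega+B_1$, supported in a bounded set of $\eta$'s; dominated convergence then gives exactly the asserted estimate. The two genuinely delicate points are the algebraic elimination producing the identity for $\Phi^* h^\epsilon\Phi$ with the sharp factor $(c_3^\epsilon)^{-1}$ — with the naive symbol $\xi_1^2+e^{2y_1}\abs{\tilde\xi}^2$ one is instead left with a term $(c_3^\epsilon-1)\norm{e^{x_1}\nabla_{\tilde x}\psi}^2$, which is not of lower order — and the ordering of the limits, which is forced because $c_1^\epsilon$ and $c_2^\epsilon$ blow up like $\epsilon^{-2}$ as $\epsilon\to 0$. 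The integration by parts, the density argument, the change of variables, and the applications of dominated convergence are all routine.
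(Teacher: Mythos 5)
Your proof is correct and follows essentially the same route as the paper's: the same two phase-space lemmas with the factorised weight $g=g_1\tilde g$, the same cancellation of the $\norm{e^{x_1}\nabla_{\tilde x}\psi}^2$ term via the $(c_3^\epsilon)^{-1}$ factor in the modified symbol $h^\epsilon$, the same bound $e^{2x_1}\le C_\Omega$ on $\Omega$, and the same order of limits ($\lambda\to\infty$ with $\epsilon$ fixed, then $\epsilon\to 0$). The only cosmetic differences are that you drop the negative part of the phase-space integrand directly using $\gamma\ge 0$ (where the paper invokes Jensen's inequality pointwise for each $\psi_k$) and that you use the density $m^\epsilon(y)=\norm{\e^\epsilon}^2_{L^2(\Omega)}$ in place of the paper's cruder bound $\chi_{\Omega^\epsilon}(y)$.
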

\begin{proof}
Assume that $g$ is supported in the unit ball $B_1(0)$ and that it can be decomposed as $g(z)=g_1(z_1)\tilde{g}(\tilde{z})$, where
$g_1,\tilde{g}\ge 0$ are symmetric in the sense that $g_1(-x_1)=g_1(x_1)$ and $\tilde{g}(-\tilde{x})=\tilde{g}(\tilde{x})$, compactly supported and satisfy
$\int_\IR g_1(z_1)^2 dz_1=\int_{\IR^{d-1}} \tilde{g}(\tilde{z})^2 d\tilde{z}=1$. Define $g^\epsilon(z)=\epsilon^{-d/2}g(z/\epsilon)$.  Denote by $\Omega^\epsilon$ the set $\{x\in\IR^d\colon \dist(x,\Omega)<\epsilon\}$.
As before, let $\lambda_k$, $k\in\IN$ be the eigenvalues of $H$. Since $H$ is clearly elliptic, we can find according eigenfunctions $\psi_k\in H^1_0(\Omega)$ such that $(\psi_k)_k$ is an orthonormal basis of $L^2(\Omega)$ (see \cite{evans}). Extending these eigenfunctions by zero outside $\Omega$, we obtain functions in $H^1(\IR^d)$ that are compactly supported. We can therefore apply the preceding two lemmas to the functions $\psi_k$.
For the eigenvalues we find 
$$\lambda_k=\inner{\psi_k}{H\psi_k}_{L^2(\Omega)}=\inner{\psi_k}{H\psi_k}_{L^2(\IR^d)}=\norm{\partial_{x_1}\psi_k}^2+\norm{e^{x_1}\nabla_{\tilde{x}}\psi_k}^2.$$
For brevity we denote, again, 
\begin{align*}c^\epsilon_1&=\int_{\IR^d}(\partial_{z_1}g^\epsilon(z))^2 dz\\
c^\epsilon_2&=\int_{\IR^d}e^{2z_1}\abs{{\nabla_{\tilde{z}}}g^\epsilon(z)}^2dz\\
c^\epsilon_3&=\int_{\IR^d}e^{2z_1}g^\epsilon(z)^2dz.
\end{align*}
Denoting $R=\sup\{y_1|\ y\in\Omega\}$, we find
\begin{align*}
(\lambda-\lambda_k)_+&=\left(\lambda-\inner{\psi_k}{\Phi^* {\xi_1}^2\Phi\psi_k}+c^\epsilon_1-(c^\epsilon_3)^{-1}\inner{\psi_k}{\Phi^* e^{2y_1}\abs{\tilde{\xi}}^2\Phi\psi_k}+(c^\epsilon_3)^{-1}c^\epsilon_2\norm{e^{x_1}\psi_k}^2\right)_+\\
&\le\left(\lambda-\inner{\psi_k}{\Phi^* {\xi_1}^2\Phi\psi_k}+c^\epsilon_1-(c^\epsilon_3)^{-1}\inner{\psi_k}{\Phi^* e^{2y_1}\abs{\tilde{\xi}}^2\Phi\psi_k}+(c^\epsilon_3)^{-1}c^\epsilon_2e^{2R}\right)_+\\
&=\left(\inner{\Phi\psi_k}{(\lambda-\xi_1^2-(c^\epsilon_3)^{-1}e^{2y_1}\abs{\tilde{\xi}}^2+c^\epsilon_1+(c^\epsilon_3)^{-1}c^\epsilon_2e^{2R})\Phi\psi_k}\right)_+\\
&\le\inner{\Phi\psi_k}{(\lambda-\xi_1^2-(c^\epsilon_3)^{-1}e^{2y_1}\abs{\tilde{\xi}}^2+c^\epsilon_1+(c^\epsilon_3)^{-1}c^\epsilon_2e^{2R})_+\Phi\psi_k}
\end{align*}
where the last inequality is due to Jensen's inequality. Summing over $k$ then yields
$$\sum_k(\lambda-\lambda_k)_+\le \int_{\IR^{2d}}(\lambda-\xi_1^2-(c^\epsilon_3)^{-1}e^{2y_1}\abs{\tilde{\xi}}^2+c^\epsilon_1+(c^\epsilon_3)^{-1}c^\epsilon_2e^{2R})_+\sum_k\abs{\Phi\psi_k(\xi,y)}^2 \frac{d\xi dy}{(2\pi)^d}.$$
Since $(\psi_k)$ is an orthonormal basis, we obtain
$$\sum_k\abs{\Phi\psi_k(\xi,y)}^2=\sum_k\abs{\inner{\e}{\psi_k}}^2=\norm{\e}^2_{L^2(\Omega)}\le \norm{\e}^2_{L^2(\IR^d)}=1$$
Since the map $y\mapsto \Phi\psi (y,\xi)$ is supported in $\supp\psi + \supp g^\epsilon\subset \overline{\Omega} + B_\epsilon(0)\subset\Omega^\epsilon$ for $\psi\in H_0^1(\Omega)$, we find
$$\sum_k\abs{\Phi\psi_k(\xi,y)}^2\le \chi_{\Omega^\epsilon}(y).$$
Hence
\begin{align*}
\sum_k(\lambda-\lambda_k)_+&\le \int_{\IR^{d}\times \Omega^\epsilon}(\lambda-\xi_1^2-(c^\epsilon_3)^{-1}e^{2y_1}\abs{\tilde{\xi}}^2+c^\epsilon_1+(c^\epsilon_3)^{-1}c^\epsilon_2e^{2R})_+ \frac{d\xi dy}{(2\pi)^d}\\
&=\lambda^{1+d/2}\int_{\IR^{d}\times \Omega^\epsilon}(1-\xi_1^2-(c^\epsilon_3)^{-1}e^{2y_1}\abs{\tilde{\xi}}^2+c^\epsilon_1\lambda^{-1}+(c^\epsilon_3)^{-1}c^\epsilon_2e^{2R}\lambda^{-1})_+ \frac{d\xi dy}{(2\pi)^d}.
\end{align*}
Consequently 
\begin{align*}
\limsup_{\lambda\to\infty}\lambda^{-1-d/2}\sum_k(\lambda-\lambda_k)_+&\le \int_{\IR^{d}\times \Omega^\epsilon}(1-\xi_1^2-(c^\epsilon_3)^{-1}e^{2y_1}\abs{\tilde{\xi}}^2)_+ \frac{d\xi dy}{(2\pi)^d}.
\end{align*}
Letting $\epsilon\to 0$ finishes the proof by dominated convergence and since $c_3^\epsilon\to 1$.
\end{proof}
\begin{cor}
Let $\Omega\subset\IR^d$ be a bounded open set and let $\lambda_k$, $k\in\IN$ be the eigenvalues of the operator $H$ on $\Omega$ with Dirichlet boundary conditions. Then
$$\lim_{\lambda\to\infty}\lambda^{-1-d/2}\sum_k (\lambda-{\lambda}_k)_+= \int_{\IR^d\times \Omega}\left(1 - \xi_1^2-e^{2y_1}\abs{\tilde{\xi}}^2\right)_+\frac{d\xi dy}{(2\pi)^d}$$
\end{cor}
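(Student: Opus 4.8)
The plan is to combine the two preceding theorems, which together constitute a two-sided asymptotic bound on the Riesz mean $\sum_k(\lambda-\lambda_k)_+$ of the Dirichlet eigenvalues of $H$ on $\Omega$. The corollary is then immediate.

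\medskip

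Concretely, the lower trace bound theorem gives
$$\liminf_{\lambda\to\infty}\lambda^{-1-d/2}\sum_k (\lambda-{\lambda}_k)_+\ge \int_{\IR^d\times \Omega}\left(1 - \xi_1^2-e^{2y_1}\abs{\tilde{\xi}}^2\right)_+\frac{d\xi dy}{(2\pi)^d},$$
while the upper trace bound theorem gives
$$\limsup_{\lambda\to\infty}\lambda^{-1-d/2}\sum_k (\lambda-{\lambda}_k)_+\le \int_{\IR^d\times \Omega}\left(1 - \xi_1^2-e^{2y_1}\abs{\tilde{\xi}}^2\right)_+\frac{d\xi dy}{(2\pi)^d}.$$
Since $\liminf \le \limsup$ always holds, the two inequalities force both to equal the common value $\int_{\IR^d\times \Omega}(1 - \xi_1^2-e^{2y_1}\abs{\tilde{\xi}}^2)_+\,(2\pi)^{-d}\,d\xi\,dy$. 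In particular the limit exists and equals that integral. One should also note that the integral is finite: on $\Omega$ bounded the factor $e^{2y_1}$ is bounded below by a positive constant, so the region $\{\xi_1^2+e^{2y_1}\abs{\tilde\xi}^2<1\}$ has finite $\xi$-volume uniformly in $y\in\Omega$, and $\abs{\Omega}<\infty$; hence the integrand is integrable and the statement is not vacuous.

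\medskip

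There is essentially no obstacle here — the only thing to check is that the two theorems have been proved with exactly matching right-hand sides, which they have. The genuine mathematical content (the choice of mollified coherent states $\e^\epsilon$, the symbol computation of Lemma \ref{symbol}, the Jensen/convexity arguments, and the dominated-convergence passages $\lambda\to\infty$ followed by $\epsilon\to0$) has already been carried out in establishing the two one-sided bounds, so the corollary is a formal consequence and its proof is a single line invoking the two preceding theorems.
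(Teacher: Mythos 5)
Your proposal is correct and matches the paper's (implicit) argument exactly: the corollary is stated without proof precisely because it is the immediate conjunction of the lower and upper trace bound theorems via $\liminf \le \limsup$. The added remark on finiteness of the right-hand side is a reasonable sanity check, though not strictly required.
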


\section{Asymptotic Formula}
Recall that for the euclidean Laplacian with Dirichlet boundary conditions we obtained inequality \ref{lowerE} reading:
$$\sum_k(\lambda-\lambda_k)_+\ge\lambda^{1+d/2}\abs{\Omega_{2\epsilon}}\int_{\IR^d}\left(1 -\abs{\xi}^2-\frac{1}{\lambda}\int_{\IR^d} \abs{\nabla g^\epsilon(z)}^2dz\right)_+\frac{d\xi}{(2\pi)^d}.$$
Denoting $B=\{\xi\in\IR^d|\ \abs{\xi}^2\le 1\}$, we can clearly make the estimate
\begin{align*}
\sum_k(\lambda-\lambda_k)_+&\ge\lambda^{1+d/2}\abs{\Omega_{2\epsilon}}\int_{B}\left(1 -\abs{\xi}^2-\frac{1}{\lambda}\int_{\IR^d} \abs{\nabla g^\epsilon(z)}^2dz\right)_+\frac{d\xi}{(2\pi)^d}\\
&\ge \lambda^{1+d/2}\abs{\Omega_{2\epsilon}}\int_{B}\left(1 -\abs{\xi}^2\right)_+\frac{d\xi}{(2\pi)^d}-\lambda^{d/2}\frac{\abs{\Omega_{2\epsilon}}\abs{B}}{(2\pi)^d}\int_{\IR^d} \abs{\nabla g^\epsilon(z)}^2dz\\
&\ge \lambda^{1+d/2}\abs{\Omega_{2\epsilon}}\int_{B}\left(1 -\abs{\xi}^2\right)_+\frac{d\xi}{(2\pi)^d}-\lambda^{d/2}\frac{\abs{\Omega}\abs{B}}{(2\pi)^d}\int_{\IR^d} \abs{\nabla g^\epsilon(z)}^2dz\\
&\ge \lambda^{1+d/2}\abs{\Omega}\int_{B}\left(1 -\abs{\xi}^2\right)_+\frac{d\xi}{(2\pi)^d}-R(\lambda,\epsilon)
\end{align*}
where
$$R(\lambda,\epsilon)=\lambda^{1+d/2}\left(\abs{\Omega}-\abs{\Omega_{2\epsilon}}\right)\int_{B}\left(1 -\abs{\xi}^2\right)_+\frac{d\xi}{(2\pi)^d}+\lambda^{d/2}\frac{\abs{\Omega}\abs{B}}{(2\pi)^d}\int_{\IR^d} \abs{\nabla g^\epsilon(z)}^2dz$$

Note that $$\int_{\IR^d} \abs{\nabla g^\epsilon(z)}^2dz=\epsilon^{-2}\int_{\IR^d} \abs{\nabla g(z)}^2dz.$$
Let $\epsilon=\lambda^{-\alpha}$ with $\alpha>0$. Then there are constants $L,C\ge 0$ such that
$$R(\lambda,\epsilon)=L \lambda^{1+d/2}\left(\abs{\Omega}-\abs{\Omega_{2\epsilon}}\right) + C \lambda^{d/2+2\alpha}.$$
Suppose that $\abs{\Omega}-\abs{\Omega_{2\epsilon}}=O(\epsilon)$. This is in particular the case when the inner Minkowski content of $\Omega$ exists which was proven to be the case when the boundary of $\Omega$ is Lipschitz-regular and $\Omega$ itself is bounded in \cite{ambrosio}.
Then,
$$R(\lambda, \lambda^{-\alpha})= C \lambda^{d/2+2\alpha}+ O(\lambda^{1+d/2-\alpha}).$$
Minimizing $\max\{d/2+2\alpha,\ 1+d/2-\alpha\}$, we find $\alpha=1/3$. Consequently
$$\sum_k(\lambda-\lambda_k)_+\ge \lambda^{1+d/2}\abs{\Omega}\int_{B}\left(1 -\abs{\xi}^2\right)_+\frac{d\xi}{(2\pi)^d}+O(\lambda^{d/2+2/3}).$$
Combining this with the upper estimate \ref{upperE} we obtain the following 
\begin{theorem}
Let $\Omega$ be a bounded domain whose boundary is Lipschitz-regular. Then the eigenvalues $\lambda_k$ of the Dirichlet Laplacian on $\Omega$ satisfy
$$\sum_k(\lambda-\lambda_k)_+= \lambda^{1+d/2}\abs{\Omega}\int_{B}\left(1 -\abs{\xi}^2\right)_+\frac{d\xi}{(2\pi)^d}+O(\lambda^{d/2+2/3}).$$
\end{theorem}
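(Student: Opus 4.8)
The plan is to combine the two-sided bounds on $\sum_k(\lambda-\lambda_k)_+$ that were essentially already assembled in the paper, and then optimize the choice of the mollification parameter $\epsilon$ as a function of $\lambda$. For the lower bound I would start from inequality \eqref{lowerE}, namely
$$\sum_k(\lambda-\lambda_k)_+\ge\lambda^{1+d/2}\abs{\Omega_{2\epsilon}}\int_{\IR^d}\left(1-\xi^2-\tfrac1\lambda\int_{\IR^d}\abs{\nabla g^\epsilon(z)}^2dz\right)_+\frac{d\xi}{(2\pi)^d},$$
and crudely bound the integrand below by restricting to the unit ball $B$ and peeling off the $\tfrac1\lambda\int\abs{\nabla g^\epsilon}^2$ correction linearly, exactly as in the displayed chain of estimates preceding the theorem. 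This produces the main term $\lambda^{1+d/2}\abs{\Omega}\int_B(1-\xi^2)_+\,\frac{d\xi}{(2\pi)^d}$ minus an error $R(\lambda,\epsilon)$ that splits into a boundary-layer piece proportional to $\abs{\Omega}-\abs{\Omega_{2\epsilon}}$ and a gradient piece proportional to $\lambda^{d/2}\epsilon^{-2}$ (using the scaling $\int\abs{\nabla g^\epsilon}^2=\epsilon^{-2}\int\abs{\nabla g}^2$).

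Next I would invoke the Lipschitz-boundary hypothesis: by the existence of the inner Minkowski content (citing \cite{ambrosio}) one has $\abs{\Omega}-\abs{\Omega_{2\epsilon}}=O(\epsilon)$ as $\epsilon\to0$. Substituting $\epsilon=\lambda^{-\alpha}$ with $\alpha>0$ turns $R$ into $O(\lambda^{1+d/2-\alpha})+C\lambda^{d/2+2\alpha}$, and balancing the two exponents by solving $1+d/2-\alpha=d/2+2\alpha$ gives $\alpha=1/3$ and error exponent $d/2+2/3$. Hence $\sum_k(\lambda-\lambda_k)_+\ge\lambda^{1+d/2}\abs{\Omega}\int_B(1-\xi^2)_+\,\frac{d\xi}{(2\pi)^d}+O(\lambda^{d/2+2/3})$. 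For the matching upper bound I would simply quote Proposition \ref{upperE}: $\sum_k(\lambda-\lambda_k)_+\le\abs{\Omega}\int_{\IR^d}(\lambda-\xi^2)_+\,\frac{d\xi}{(2\pi)^d}=\lambda^{1+d/2}\abs{\Omega}\int_B(1-\xi^2)_+\,\frac{d\xi}{(2\pi)^d}$ by the substitution $\xi\mapsto\sqrt\lambda\,\xi$, which has \emph{zero} error relative to the main term and in particular is $\le$ main term $+O(\lambda^{d/2+2/3})$. Combining the two inequalities yields the claimed two-sided asymptotic with error $O(\lambda^{d/2+2/3})$.

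The only genuine subtlety — and thus the step I would treat most carefully — is the geometric input $\abs{\Omega}-\abs{\Omega_{2\epsilon}}=O(\epsilon)$; everything else is bookkeeping with explicit constants. I would make sure to state precisely that Lipschitz regularity of $\partial\Omega$ together with boundedness of $\Omega$ guarantees finite (one-sided) Minkowski content, so that the $O(\epsilon)$ bound is legitimate, and note that without such a regularity assumption the boundary-layer term need not be $O(\epsilon)$ and the error exponent would degrade. A secondary point worth a sentence is that the constants $L$ and $C$ absorbed into $R(\lambda,\epsilon)$ depend on $g$ (through $\int\abs{\nabla g}^2$) and on $d$ but not on $\lambda$, so fixing any admissible $g$ once and for all is harmless. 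With those remarks in place the proof is a direct synthesis of \eqref{lowerE}, Proposition \ref{upperE}, and the $\alpha=1/3$ optimization.
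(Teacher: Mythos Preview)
Your proposal is correct and follows essentially the same approach as the paper: starting from \eqref{lowerE}, restricting to $B$, using $\int|\nabla g^\epsilon|^2=\epsilon^{-2}\int|\nabla g|^2$, invoking $\abs{\Omega}-\abs{\Omega_{2\epsilon}}=O(\epsilon)$ from the Lipschitz assumption via \cite{ambrosio}, optimizing $\epsilon=\lambda^{-1/3}$, and then closing with the Li--Yau upper bound of Proposition~\ref{upperE}. Your added remarks on the constants and on the necessity of the regularity hypothesis are sound and do not depart from the paper's argument.
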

This is a significantly weaker result than the one obtained in \cite{larson} where the following two-term asymptotic formula was proven for bounded Lipschitz domains:
$$\sum_k(\lambda-\lambda_k)_+=L_d\abs{\Omega}\lambda^{d/2+1}-\frac{L_{d-1}}{4}\cH^{d-1}(\partial\Omega)\lambda^{(d-1)/2+1}+o(\lambda^{(d-1)/2+1}),$$
for constants $L_d,L_{d-1}\ge 0$.

\color{black}
We intend to prove a similar asymptotic formula for the hyperbolic Laplacian $H$.
\begin{lm} Assume that $\Omega\subset\IR^d$ is open and bounded and that $\partial\Omega$ is Lipschitz-regular. Then the trace of the hyperbolic Laplacian on $\Omega$ with Dirichlet boundary conditions satisfies
$$\sum_k (\lambda-{\lambda}_k)_+\ge \lambda^{1+d/2}\int_{\IR^d\times \Omega}\left(\lambda - \xi_1^2-e^{2y_1}\abs{\tilde{\xi}}^2\right)_+\frac{d\xi dy}{(2\pi)^d}+O(\lambda^{d/2+2/3}).$$
\end{lm}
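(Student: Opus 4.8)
The strategy is to rerun the argument carried out above for the euclidean Laplacian, keeping track of the constants $c^\epsilon_1,c^\epsilon_2,c^\epsilon_3$ from Lemma~\ref{symbol} and of the factor $e^{2y_1}$, which is bounded on the bounded set $\Omega$. (By the scaling $\xi\mapsto\sqrt{\lambda}\,\xi$ the right-hand side equals $\lambda^{1+d/2}\int_{\IR^d\times\Omega}(1-\xi_1^2-e^{2y_1}\abs{\tilde{\xi}}^2)_+\,\frac{d\xi\,dy}{(2\pi)^d}$.) Fix $g\in C^\infty_c(\IR^d)$ supported in the unit ball and set $\epsilon=\lambda^{-\alpha}$ with $\alpha>0$ to be chosen at the end. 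The starting point is inequality~\ref{low},
$$\sum_k (\lambda-\lambda_k)_+\ge \int_{\IR^d\times\Omega_{2\epsilon}}\Bigl(\lambda-\xi_1^2-e^{2y_1}\abs{\tilde{\xi}}^2 c^\epsilon_3-e^{2y_1}c^\epsilon_2-c^\epsilon_1\Bigr)_+\frac{d\xi\,dy}{(2\pi)^d}.$$
First I would estimate the three constants: the substitution $z=\epsilon w$ gives $c^\epsilon_1=\epsilon^{-2}\int_{\IR^d}(\partial_{z_1}g)^2\,dz$ and $c^\epsilon_2=\epsilon^{-2}\int_{\IR^d}e^{2\epsilon w_1}\abs{\nabla_{\tilde{w}}g(w)}^2\,dw\le e^2\,\epsilon^{-2}\int_{\IR^d}\abs{\nabla_{\tilde{z}}g}^2\,dz$ because $\supp g\subset\{\abs z<1\}$, so $c^\epsilon_1,c^\epsilon_2=O(\epsilon^{-2})=O(\lambda^{2\alpha})$; likewise $c^\epsilon_3=\int_{\IR^d}e^{2\epsilon w_1}g(w)^2\,dw$, whence $1\le\max(1,c^\epsilon_3)\le e^{2\epsilon}=1+O(\epsilon)$. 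Since replacing $c^\epsilon_3$ by $\max(1,c^\epsilon_3)$ only decreases the integrand, and writing $R=\sup\{y_1:y\in\Omega\}<\infty$, the integrand above is bounded below by $\bigl(\lambda-\xi_1^2-e^{2y_1}\abs{\tilde{\xi}}^2-C\epsilon\, e^{2y_1}\abs{\tilde{\xi}}^2-K\epsilon^{-2}\bigr)_+$ for constants $C,K$ depending only on $g$ and $R$.

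The second step is to compare the integral of this quantity over $\IR^d\times\Omega_{2\epsilon}$ with $I(\lambda):=\int_{\IR^d\times\Omega}(\lambda-\xi_1^2-e^{2y_1}\abs{\tilde{\xi}}^2)_+\,\frac{d\xi\,dy}{(2\pi)^d}$, accounting for three errors. For fixed $y$ the set $E_y=\{\xi:\xi_1^2+e^{2y_1}\abs{\tilde{\xi}}^2<\lambda\}$ is an ellipsoid of $\xi$-volume $\omega_d\lambda^{d/2}e^{-(d-1)y_1}$, and $\int_{\IR^d}(\lambda-\xi_1^2-e^{2y_1}\abs{\tilde{\xi}}^2)_+\,d\xi=c_d\lambda^{1+d/2}e^{-(d-1)y_1}$. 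Since $\Omega$ is bounded, both $e^{2y_1}$ and $e^{-(d-1)y_1}$ are bounded on $\Omega$, so: (i) replacing $\Omega$ by $\Omega_{2\epsilon}$ loses at most $C\lambda^{1+d/2}(\abs\Omega-\abs{\Omega_{2\epsilon}})$, which is $O(\lambda^{1+d/2-\alpha})$ because $\partial\Omega$ is Lipschitz-regular and hence $\abs\Omega-\abs{\Omega_{2\epsilon}}=O(\epsilon)$ by \cite{ambrosio}; (ii) the elementary inequality $a_+-(a-\delta)_+\le\delta\,\chi_{\{a>0\}}$ for $\delta\ge0$, applied with $\delta=C\epsilon e^{2y_1}\abs{\tilde{\xi}}^2$ and integrated over $E_y$, on which $e^{2y_1}\abs{\tilde{\xi}}^2<\lambda$, loses $O(\epsilon\lambda^{1+d/2})=O(\lambda^{1+d/2-\alpha})$; (iii) the same inequality with $\delta=K\epsilon^{-2}$ loses $O(\epsilon^{-2}\lambda^{d/2})=O(\lambda^{d/2+2\alpha})$. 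All three bounds are uniform in $y\in\Omega$.

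Collecting these, $\sum_k(\lambda-\lambda_k)_+\ge I(\lambda)-O(\lambda^{1+d/2-\alpha})-O(\lambda^{d/2+2\alpha})$, and the choice $\alpha=1/3$ makes both error exponents equal to $d/2+2/3$, giving the assertion. The main obstacle, beyond the bookkeeping, is keeping every estimate uniform over $\Omega$: this is exactly what forces the boundedness of $\Omega$ to be used twice — once to bound $e^{2y_1}$ from above, which controls $c^\epsilon_2$ and the term $C\epsilon e^{2y_1}\abs{\tilde{\xi}}^2$, and once to bound $e^{-(d-1)y_1}$ from above, which controls the $\xi$-integrals obtained after slicing. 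A further point to watch is that $c^\epsilon_3$ multiplies a subtracted term, so it must be replaced by $\max(1,c^\epsilon_3)$, not simply by $1$, in order to retain a genuine lower bound.
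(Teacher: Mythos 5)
Your proposal is correct and follows essentially the same approach as the paper: both start from inequality~\ref{low}, estimate $c^\epsilon_1,c^\epsilon_2=O(\epsilon^{-2})$ and $c^\epsilon_3=1+O(\epsilon)$, account for the boundary layer $\Omega\setminus\Omega_{2\epsilon}$ via the Lipschitz assumption, and optimize over $\epsilon=\lambda^{-\alpha}$ to land on $\alpha=1/3$. The only difference is cosmetic bookkeeping: the paper rescales $\xi$ first and restricts to the fixed bounded set $B_\epsilon=\{\xi_1^2+e^{2y_1}\abs{\tilde\xi}^2\le1\}$ to bound $e^{2y_1}\abs{\tilde\xi}^2$ by $1$, whereas you slice in $y$ and integrate each error over the ellipsoid $E_y$ directly; likewise your $\max(1,c^\epsilon_3)$ device is the same as the paper's $(c^\epsilon_3-1)_+$.
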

\begin{proof}
Recall that in \ref{low} we found
$$\sum_k (\lambda-{\lambda}_k)_+\ge \int_{\IR^d\times {\Omega_{2\epsilon}}}\left(\lambda - \xi_1^2-e^{2y_1}\abs{\tilde{\xi}}^2c^\epsilon_3-e^{2y_1}c^\epsilon_2-c^\epsilon_1 \right)_+\frac{d\xi dy}{(2\pi)^d}.$$
Defining $B_\epsilon=\{(\xi, y)\in \IR^d\times \Omega_{2\epsilon}\colon \xi_1^2+e^{2y_1}\abs{\tilde{\xi}}^2\le 1\}$, one evidently has
\begin{align*}
\sum_k (\lambda-{\lambda}_k)_+&\ge \int_{B_\epsilon}\left(\lambda - \xi_1^2-e^{2y_1}\abs{\tilde{\xi}}^2c^\epsilon_3-e^{2y_1}c^\epsilon_2-c^\epsilon_1 \right)_+\frac{d\xi dy}{(2\pi)^d}\\
&=\lambda^{1+d/2} \int_{B_\epsilon}\left(1- \xi_1^2-e^{2y_1}\abs{\tilde{\xi}}^2c^\epsilon_3-\frac{1}{\lambda}e^{2y_1}c^\epsilon_2-\frac{1}{\lambda}c^\epsilon_1 \right)_+\frac{d\xi dy}{(2\pi)^d}.
\end{align*}

For the integrand we find
$$\left(1- \xi_1^2-e^{2y_1}\abs{\tilde{\xi}}^2c^\epsilon_3-\frac{1}{\lambda}e^{2y_1}c^\epsilon_2-\frac{1}{\lambda}c^\epsilon_1 \right)_+\ge \left(1- \xi_1^2-e^{2y_1}\abs{\tilde{\xi}}^2\right)_+-e^{2y_1}\abs{\tilde{\xi}}^2(c^\epsilon_3-1)_+-\frac{1}{\lambda}e^{2y_1}c^\epsilon_2-\frac{1}{\lambda}c^\epsilon_1 .$$
Consequently,
\begin{align*}
\sum_k (\lambda-{\lambda}_k)_+&\ge \lambda^{1+d/2}\int_{B} \left(1- \xi_1^2-e^{2y_1}\abs{\tilde{\xi}}^2\right)_+ \frac{d\xi dy}{(2\pi)^d}-\lambda^{1+d/2}\int_{B\setminus B_\epsilon} \left(1- \xi_1^2-e^{2y_1}\abs{\tilde{\xi}}^2\right)_+ \frac{d\xi dy}{(2\pi)^d}\\
& -\lambda^{1+d/2}(c_3^\epsilon-1)_+\frac{\abs{B}}{(2\pi)^d}-\lambda^{d/2}c_2^\epsilon e^{2R}\frac{\abs{B}}{(2\pi)^d}-\lambda^{d/2}c_1^\epsilon \frac{\abs{B}}{(2\pi)^d},
\end{align*}
where $B=\{(\xi, y)\in \IR^d\times \Omega\colon \xi_1^2+e^{2y_1}\abs{\tilde{\xi}}^2\le 1\}$ and $R=\sup\{y_1\colon\ y\in\Omega\}$. Note that
$$\int_{B\setminus B_\epsilon} \left(1- \xi_1^2-e^{2y_1}\abs{\tilde{\xi}}^2\right)_+ \frac{d\xi dy}{(2\pi)^d}\le \frac{\abs{B\setminus B_\epsilon}}{(2\pi)^d},$$
and that
\begin{align*}
B\setminus B_\epsilon&=\{(\xi,y)\in\IR^d\times(\Omega\setminus\Omega_{2\epsilon})\colon \xi_1^2+e^{2y_1}\abs{\tilde{\xi}}^2\le 1\}\\
&\subset\{(\xi,y)\in\IR^d\times(\Omega\setminus\Omega_{2\epsilon})\colon \xi_1^2+e^{2r}\abs{\tilde{\xi}}^2\le 1\}\\
&=\{\xi\in\IR^d\colon \xi_1^2+e^{2r}\abs{\tilde{\xi}}^2\le 1\}\times (\Omega\setminus\Omega_{2\epsilon}),
\end{align*}
where $r=\inf\{y_1\colon\ y\in\Omega\}$.
Clearly, the set $\{\xi\in\IR^d\colon \xi_1^2+e^{2r}\abs{\tilde{\xi}}^2\le 1\}$ is bounded and thus of finite measure. Therefore, assuming that $\partial\Omega$ is Lipschitz-regular, we can infer that
$$\frac{\abs{B\setminus B_\epsilon}}{(2\pi)^d}= O(\epsilon),\ \epsilon\to 0.$$
Conside now the term $c_3^\epsilon-1$. We know that
$$
c^\epsilon_3=\int_{\IR^d}e^{2z_1}g^\epsilon(z)^2dz=\int_{\IR}e^{2z_1}g_1^\epsilon(z_1)^2dz_1=\int_{\IR}e^{2z_1\epsilon}g_1(z_1)^2dz_1.
$$
Hence 
$$c_3^\epsilon -1 = \int_{\IR}(e^{2z_1\epsilon}-1)g_1(z_1)^2dz_1=\int_{\IR}\sum_{k=1}^\infty \frac{(2z_1)^k}{k!}\epsilon^k g_1(z_1)^2dz_1=O(\epsilon),\ \epsilon\to 0.$$
For $c_2^\epsilon$ we find
$$c^\epsilon_2=\int_{\IR^d}e^{2z_1}\abs{{\nabla_{\tilde{z}}}g^\epsilon(z)}^2dz=\epsilon^{-2}\int_{\IR^d}e^{2z_1\epsilon}g(z_1)^2dz_1\int_{\IR^d}e^{2z_1}\abs{{\nabla_{\tilde{z}}}\tilde{g}(\tilde{z})}^2d\tilde{z}=O(\epsilon^{-2}),\ \epsilon\to 0,$$
and finally
$$c^\epsilon_1=\int_{\IR^d}(\partial_{z_1}g^\epsilon(z))^2 dz=\int_{\IR}(\partial_{z_1}g_1^\epsilon(z_1))^2 dz_1\int_{\IR^{d-1}}\tilde{g}(\tilde{z})^2 d\tilde{z}=\epsilon^{-2}\int_{\IR}(\partial_{z_1}g_1(z_1))^2 dz_1=O(\epsilon^{-2}).$$
Linking $\epsilon$ to $\lambda$, again, via $\epsilon=\lambda^{-\alpha}$ we obtain
$$\sum_k (\lambda-{\lambda}_k)_+\ge \lambda^{1+d/2}\int_{\IR^d\times \Omega}\left(\lambda - \xi_1^2-e^{2y_1}\abs{\tilde{\xi}}^2\right)_+\frac{d\xi dy}{(2\pi)^d}-R(\lambda),$$
where $R(\lambda)$ is a linear combination of terms that are in $O(\lambda^{d/2+1-\alpha})$ and $O(\lambda^{d/2-2\alpha})$ respectively. Again, minimizing $\max\{d/2+2\alpha,\ 1+d/2-\alpha\}$ yields $\alpha=1/3$ and thus
$$\sum_k (\lambda-{\lambda}_k)_+\ge \lambda^{1+d/2}\int_{\IR^d\times \Omega}\left(\lambda - \xi_1^2-e^{2y_1}\abs{\tilde{\xi}}^2\right)_+\frac{d\xi dy}{(2\pi)^d}+O(\lambda^{d/2+2/3}).$$
\end{proof}
In contrast to the case of the euclidean Laplacian, we need to prove an asymptotic upper bound for the hyperbolic Laplacian as well.
\begin{lm}Let the same conditions as in the preceding lemma be satisfied. Then
$$\sum_k (\lambda-{\lambda}_k)_+\le\lambda^{1+d/2} \int_{\IR^{d}\times \Omega}(1- {\xi_1}^2-e^{2y_1}\abs{\tilde{\xi}}^2)_+ \frac{d\xi dy}{(2\pi)^d}+O(\lambda^{d/2+2/3})$$
\end{lm}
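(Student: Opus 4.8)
The plan is to re-run the argument behind the Upper Trace Bound theorem without passing to the limit in $\epsilon$, keeping every error term explicit, and then to tie $\epsilon$ to $\lambda$ exactly as in the preceding lemma. Recall that with $g=g_1\tilde g$ of product form, supported in the unit ball, and $g^\epsilon(z)=\epsilon^{-d/2}g(z/\epsilon)$, the proof of that theorem produced
$$\sum_k(\lambda-\lambda_k)_+\le \int_{\IR^{d}\times \Omega^\epsilon}\Bigl(\lambda-\xi_1^2-(c^\epsilon_3)^{-1}e^{2y_1}\tilde{\xi}^2+c^\epsilon_1+(c^\epsilon_3)^{-1}c^\epsilon_2e^{2R}\Bigr)_+\frac{d\xi dy}{(2\pi)^d},$$
with $R=\sup\{y_1\colon y\in\Omega\}$ and the $c_i^\epsilon$ as above. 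The preceding lemma already records $c_1^\epsilon=O(\epsilon^{-2})$, $c_2^\epsilon=O(\epsilon^{-2})$ and $c_3^\epsilon=1+O(\epsilon)$; moreover $c_3^\epsilon\ge 1$ by Jensen's inequality for the convex function $t\mapsto e^{2\epsilon t}$ together with $\int t\,g_1(t)^2\,dt=0$, while $c_3^\epsilon\le e^{2\epsilon}$ because $g_1$ is supported in $(-1,1)$; hence $(c_3^\epsilon)^{-1}\in[e^{-2},1]$ and $0\le 1-(c_3^\epsilon)^{-1}=O(\epsilon)$.

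First I would rescale $\xi=\sqrt{\lambda}\,\eta$, extracting an overall $\lambda^{1+d/2}$, and restrict attention to $\lambda$ large with, say, $\lambda^{-1/2}\le\epsilon\le1$ (both satisfied by the eventual choice $\epsilon=\lambda^{-1/3}$), so that $\delta:=\lambda^{-1}\bigl(c_1^\epsilon+(c_3^\epsilon)^{-1}c_2^\epsilon e^{2R}\bigr)=O(\epsilon^{-2}/\lambda)$ stays bounded. Using $y_1\ge r:=\inf\{y_1\colon y\in\Omega\}$, hence $y_1\ge r-\epsilon$ on $\Omega^\epsilon$, together with $(c_3^\epsilon)^{-1}\ge e^{-2}$, one sees that the rescaled integrand $(1-\eta_1^2-(c_3^\epsilon)^{-1}e^{2y_1}\tilde{\eta}^2+\delta)_+$ is supported in a fixed set $K\times\Omega^1$ of finite measure, with $K\subset\IR^d$ compact and independent of $\epsilon,\lambda$. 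On this set I would peel off the perturbations via $(a+b)_+\le a_++b$ for $b\ge0$: writing $(c_3^\epsilon)^{-1}e^{2y_1}\tilde{\eta}^2=e^{2y_1}\tilde{\eta}^2-(1-(c_3^\epsilon)^{-1})e^{2y_1}\tilde{\eta}^2$ and bounding $(1-(c_3^\epsilon)^{-1})e^{2y_1}\tilde{\eta}^2=O(\epsilon)$ and $\delta=O(\epsilon^{-2}/\lambda)$ uniformly on $K\times\Omega^1$, integration of these constants over a set of bounded measure yields
$$\sum_k(\lambda-\lambda_k)_+\le \lambda^{1+d/2}\int_{\IR^{d}\times \Omega^\epsilon}(1-\eta_1^2-e^{2y_1}\tilde{\eta}^2)_+\frac{d\eta dy}{(2\pi)^d}+O(\lambda^{1+d/2}\epsilon)+O(\lambda^{d/2}\epsilon^{-2}).$$

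Next I would replace $\Omega^\epsilon$ by $\Omega$ in the leading integral; the discrepancy equals $\lambda^{1+d/2}$ times the integral of $(1-\eta_1^2-e^{2y_1}\tilde{\eta}^2)_+$ over $\IR^d\times(\Omega^\epsilon\setminus\Omega)$, and since for $y\in\Omega^\epsilon$ this integrand is again supported in a fixed bounded $\eta$-region, the discrepancy is $O(\lambda^{1+d/2}\abs{\Omega^\epsilon\setminus\Omega})$. Here the Lipschitz regularity of $\partial\Omega$ enters exactly as in the previous lemma: it gives $\abs{\Omega^\epsilon\setminus\Omega}=O(\epsilon)$ by the Minkowski-content estimate of \cite{ambrosio}. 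Collecting terms,
$$\sum_k(\lambda-\lambda_k)_+\le \lambda^{1+d/2}\int_{\IR^{d}\times \Omega}(1-\xi_1^2-e^{2y_1}\tilde{\xi}^2)_+\frac{d\xi dy}{(2\pi)^d}+O(\lambda^{1+d/2}\epsilon)+O(\lambda^{d/2}\epsilon^{-2}),$$
and putting $\epsilon=\lambda^{-\alpha}$ and minimizing $\max\{1+d/2-\alpha,\ d/2+2\alpha\}$ forces $\alpha=1/3$, which gives the asserted remainder $O(\lambda^{d/2+2/3})$. The main obstacle is not any single estimate but the uniformity of the bookkeeping: each relative error of size $O(\epsilon)$ or $O(\epsilon^{-2}/\lambda)$ must be integrated against a region whose measure stays bounded as $\epsilon\to0$, $\lambda\to\infty$, which is why pinning down the compact $\eta$-window $K$ --- and hence using the boundedness of $\Omega$ to keep $e^{2y_1}$ away from $0$ --- is the essential first step. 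The only sign that has to be checked is that of the coefficient $(c_3^\epsilon)^{-1}$ in front of $e^{2y_1}\tilde{\xi}^2$: since $c_3^\epsilon\ge1$, its deviation from $1$ is nonnegative, so it can be pulled outside the positive part in the direction that produces an upper, and not a lower, bound.
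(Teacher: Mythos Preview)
Your proposal is correct and follows essentially the same route as the paper's proof: start from the pre-limit inequality obtained in the Upper Trace Bound theorem, observe that the (rescaled) integrand is supported in a set of uniformly bounded measure, peel off the $O(\epsilon)$ and $O(\epsilon^{-2}/\lambda)$ perturbations via $(a+b)_+\le a_++b$, control $\abs{\Omega^\epsilon\setminus\Omega}=O(\epsilon)$ by Lipschitz regularity, and optimize $\epsilon=\lambda^{-1/3}$. Your version is in fact a bit more explicit than the paper's on two points --- the inequality $c_3^\epsilon\ge 1$ (which the paper uses tacitly when writing $(1-(c_3^\epsilon)^{-1})_+$) and the uniform compact $\eta$-window $K$ --- but these are refinements of the same argument, not a different one.
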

\begin{proof}
Recall that 
$$\sum_k(\lambda-\lambda_k)_+\le\lambda^{1+d/2}\int_{\IR^{d}\times \Omega^\epsilon}(1-\xi_1^2-(c^\epsilon_3)^{-1}e^{2y_1}\abs{\tilde{\xi}}^2+\lambda^{-1}c^\epsilon_1+\lambda^{-1}(c^\epsilon_3)^{-1}c^\epsilon_2e^{2R})_+ \frac{d\xi dy}{(2\pi)^d}.$$
Let $\epsilon=\lambda^{-\alpha}$. Then $((1-(c^\epsilon_3)^{-1})=O(\lambda^{-\alpha})$, $\lambda^{-1}c_1^\epsilon=O(\lambda^{-1+2\alpha})$ and $\lambda^{-1}c_2^\epsilon=O(\lambda^{-1+2\alpha})$. If $\alpha<1/2$, then all these terms converge to $0$. In this case, for sufficiently large $\lambda$, the integrand is supported in 
$B^\epsilon=\{(\xi, y)\in \IR^d\times \Omega^\epsilon\colon \xi_1^2+e^{2y_1}\abs{\tilde{\xi}}^2\le 2\}$.
Furthermore, the integrand can be estimated from above in the following way
\begin{align*}
(1-\xi_1^2-(c_3^\epsilon)^{-1}e^{2y_1}\abs{\tilde{\xi}}^2+c^\epsilon_1\lambda^{-1}+(c^\epsilon_3)^{-1}c^\epsilon_2e^{2R}\lambda^{-1})_+&\le (1-\xi_1^2-e^{2y_1}\abs{\tilde{\xi}}^2)_+ +(1-(c^\epsilon_3)^{-1})_+e^{2R}\abs{\tilde{\xi}}^2\\
&+\lambda^{-1}c^\epsilon_1+\lambda^{-1}(c^\epsilon_3)^{-1}c^\epsilon_2e^{2R}.
\end{align*}
Define $B=\{(\xi, y)\in \IR^d\times \Omega\colon \xi_1^2+e^{2y_1}\abs{\tilde{\xi}}^2\le 2\}$. Then
\begin{align*}
\sum_k(\lambda-\lambda_k)_+&\le \lambda^{1+d/2} \int_{B}(1- \xi_1^2-e^{2y_1}\abs{\tilde{\xi}}^2)_+ \frac{d\xi dy}{(2\pi)^d}\\
&+\lambda^{1+d/2} \int_{B^\epsilon\setminus B}(1- \xi_1^2-e^{2y_1}\abs{\tilde{\xi}}^2)_+ \frac{d\xi dy}{(2\pi)^d}\\
&+2e^R\lambda^{d/2+1}((1-(c^\epsilon_3)^{-1})_+\frac{\abs{B^\epsilon}}{(2\pi)^d}
+\lambda^{d/2}c^\epsilon_1\frac{\abs{B^\epsilon}}{(2\pi)^d}
+\lambda^{d/2}(c^\epsilon_3)^{-1}c^\epsilon_2e^{2R}\frac{\abs{B^\epsilon}}{(2\pi)^d}.
\end{align*}
Note that $$\int_{B^\epsilon\setminus B}(1- {\xi_1}^2-e^{2y_1}\abs{\tilde{\xi}}^2)_+ \frac{d\xi dy}{(2\pi)^d}\le \frac{\abs{B^\epsilon\setminus B}}{(2\pi)^d}=O(\epsilon)=O(\lambda^{-\alpha}).$$
Therefore, the error term is of the order $O(\lambda^{\max\{d/2+2\alpha,\ 1+d/2-\alpha\}})$. As for the lower bound, the exponent is minimized for $\alpha=1/3$. This concludes the argument.
\end{proof}
Combining the two preceding lemmas, we obtain
\begin{theorem}Let $\Omega\subset\IR^d$ be a bounded open set with Lipschitz boundary and let $\lambda_k$, $k\in\IN$ be the eigenvalues of the operator $H$ on $\Omega$ with Dirichlet boundary conditions. Then
$$\sum_k (\lambda-{\lambda}_k)_+=\lambda^{1+d/2} \int_{\IR^{d}\times \Omega}(1- {\xi_1}^2-e^{2y_1}\abs{\tilde{\xi}}^2)_+ \frac{d\xi dy}{(2\pi)^d}+O(\lambda^{d/2+2/3}).$$
\end{theorem}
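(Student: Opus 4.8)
The plan is to obtain the asymptotic identity by sandwiching the Riesz mean $\sum_k(\lambda-\lambda_k)_+$ between the lower bound of the first of the two preceding lemmas and the upper bound of the second; all of the genuine analysis has already been carried out there. The lower bound rests on Theorem~\ref{trace} applied to the overfilling family $\e^\epsilon$ on $\Omega_{2\epsilon}$ together with the symbol computation of Lemma~\ref{symbol} and Jensen's inequality (convexity of $t\mapsto t_+$) applied to the spectral measure of $H$; the upper bound rests on writing $\lambda_k=\inner{\psi_k}{H\psi_k}$, pushing it through the coherent state transform $\Phi$, and again invoking Jensen. In both arguments the mollification scale is tied to the spectral parameter by $\epsilon=\lambda^{-\alpha}$, the error splits into a boundary part of size $\lambda^{1+d/2-\alpha}$ (controlled by the Lipschitz hypothesis via $\abs{\Omega\setminus\Omega_{2\epsilon}}=O(\epsilon)$, resp. $\abs{\Omega^\epsilon\setminus\Omega}=O(\epsilon)$, and by $c_3^\epsilon-1=O(\epsilon)$) and a mollifier part of size $\lambda^{d/2+2\alpha}$ (controlled by $c_1^\epsilon,c_2^\epsilon=O(\epsilon^{-2})$), and the balancing choice $\alpha=1/3$ — which also meets the constraint $\alpha<1/2$ required in the upper estimate — produces the common exponent $d/2+2/3$.

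Concretely, I would first quote the first lemma in the form
$$\sum_k(\lambda-\lambda_k)_+\ge\lambda^{1+d/2}\int_{\IR^d\times\Omega}\left(1-\xi_1^2-e^{2y_1}\abs{\tilde{\xi}}^2\right)_+\frac{d\xi dy}{(2\pi)^d}-C_1\lambda^{d/2+2/3}$$
and the second lemma in the form
$$\sum_k(\lambda-\lambda_k)_+\le\lambda^{1+d/2}\int_{\IR^d\times\Omega}\left(1-\xi_1^2-e^{2y_1}\abs{\tilde{\xi}}^2\right)_+\frac{d\xi dy}{(2\pi)^d}+C_2\lambda^{d/2+2/3},$$
valid for all large $\lambda$, with $C_1,C_2$ independent of $\lambda$. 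Subtracting the common main term and putting $C=\max\{C_1,C_2\}$ gives
$$\Bigl\lvert\sum_k(\lambda-\lambda_k)_+-\lambda^{1+d/2}\int_{\IR^d\times\Omega}\left(1-\xi_1^2-e^{2y_1}\abs{\tilde{\xi}}^2\right)_+\frac{d\xi dy}{(2\pi)^d}\Bigr\rvert\le C\lambda^{d/2+2/3},$$
which is precisely the assertion.

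I do not expect a real obstacle, since the statement is a formal consequence of the two lemmas; the only point demanding care is the uniformity in $\lambda$ of the two $O(\lambda^{d/2+2/3})$ constants. This amounts to checking that, once $g=g_1\tilde{g}\in C^\infty_c$ is fixed, every quantity appearing in the two error analyses — the fixed integrals $\int(\partial_{z_1}g)^2\,dz$, $\int e^{2z_1}\abs{\nabla_{\tilde{z}}\tilde{g}}^2\,d\tilde{z}$, and so on, the Lebesgue measures of the bounded truncation regions in the $\xi$-variable, $\abs{\Omega}$, and $R=\sup\{y_1:y\in\Omega\}$, $r=\inf\{y_1:y\in\Omega\}$ — is finite and $\lambda$-independent, which holds because $\Omega$ is bounded and $g$ is compactly supported. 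Granting this, the squeeze is immediate and the proof is complete.
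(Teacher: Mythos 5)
Your proposal is correct and follows the paper's approach exactly — the theorem is simply the sandwich of the lower- and upper-bound lemmas, with the observation (which the paper leaves implicit) that the hidden $O$-constants are $\lambda$-independent because $g$ is fixed once and for all and $\Omega$ is bounded. As a side remark, you have written the main-term integrand correctly as $\bigl(1-\xi_1^2-e^{2y_1}\abs{\tilde{\xi}}^2\bigr)_+$ when quoting the lower-bound lemma, where the paper carries a typo with $\lambda$ in place of $1$ inside the parentheses despite the $\lambda^{1+d/2}$ prefactor.
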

\section*{Acknoledgments}
I would like to thank Ari Laptev for his help in preparing this paper. Furthermore I am indebted to the reviewers for their extremely valuable comments.
\bibliographystyle{alpha}
\bibliography{main} 
\end{document}